\tikzset{every node/.style={font=\scriptsize}}
\def\multiset#1#2{{\big(\kern-.2em \big(\genfrac{}{}{0pt}{}{#1}{#2} \big)\kern-.2em\big)}}
\def\Multiset#1#2{{\bigg(\kern-.4em\bigg(\genfrac{}{}{0pt}{}{#1}{#2}\bigg)\kern-.4em\bigg)}}
\theoremstyle{definition}
\newtheorem{theorem}{Theorem}[section]
\newtheorem{remark}[theorem]{Remark}
\newtheorem{corollary}[theorem]{Corollary}
\newtheorem{example}[theorem]{Example}
\newtheorem{proposition}[theorem]{Proposition}
\newtheorem{lemma}[theorem]{Lemma}
\newcommand\ind{\operatorname{index}}
\newcommand\outdeg{\operatorname{outdeg}}
\newcommand\vol{\operatorname{vol}}
\newcommand\PS{\operatorname{PS}}
\newcommand\Dyck{\operatorname{Dyck}}
\newcommand\LD{\Dyck}
\newcommand\DLD{\Dyck^{(2)}}
\newcommand\EW{\operatorname{EW}}
\newcommand\Car{\operatorname{Car}}
\newcommand\CT{\operatorname{CT}}
\newcommand\FF{\mathcal{F}}
\newcommand\GG{\widehat{G}}
\newcommand\ZZ{\mathbb{Z}}
\renewcommand\vec[1]{\mathbf{#1}}
\newcommand\Hyper[5]{
  {}_{#1}F_{#2} \left( 
    \begin{matrix}
      #3\\
      #4\\
    \end{matrix}
    ; #5
    \right)
}
\title{Volumes of flow polytopes related to caracol graphs}
\author{Jihyeug Jang and Jang Soo Kim}
\thanks{The authors were supported by NRF grants \#2019R1F1A1059081 and \#2016R1A5A1008055.} 
\address{Department of Mathematics,
Sungkyunkwan University (SKKU), Suwon, Gyeonggi-do 16419, South Korea}
\email{ab4242@skku.edu}
\email{jangsookim@skku.edu}
\begin{document}

\begin{abstract}
  Recently, Benedetti et al.~introduced an Ehrhart-like polynomial associated to
  a graph. This polynomial is defined as the volume of a certain flow polytope
  related to a graph and has the property that the leading coefficient is the
  volume of the flow polytope of the original graph with net flow vector
  $(1,1,\dots,1)$. Benedetti et al.~conjectured a formula for the Ehrhart-like
  polynomial of what they call a caracol graph. In this paper their conjecture
  is proved using constant term identities, labeled Dyck paths, and a cyclic
  lemma.
\end{abstract}

\maketitle
% \tableofcontents

\section{Introduction}

The main objects in this paper are flow polytopes, which are certain polytopes associated to acyclic directed graphs
with net flow vectors. Flow polytopes have interesting connections with representation theory, geometry, analysis, and
combinatorics. A well known flow polytope is the Chan--Robbins--Yuen polytope, which is the flow polytope
of the complete graph $K_{n+1}$ with net flow vector $(1,0,\dots,0,-1)$. Chan, Robbins, and Yuen \cite{CRY2000}
conjectured that the volume of this polytope is a product of Catalan numbers. Their conjecture was proved by
Zeilberger \cite{Zeilberger1999} using the Morris constant term identity \cite{Morris1982}, which is equivalent to the
famous Selberg integral \cite{Selberg1944}.

Since the discovery of the Chan--Robbins--Yuen polytope, researchers have found
many flow polytopes whose volumes have nice product formulas, see
\cite{Benedetti2019, CKM2017,Meszaros2015, MMR2017,MMS2019, MSW2019, Yip2019} and
references therein. In this paper we add another flow polytope to this list by
proving a product formula for the volume of flow polytope coming from a caracol
graph, which was recently conjectured by Benedetti et al.~\cite{Benedetti2019}.
In order to state our results we introduce necessary definitions.

We denote $[n]:=\{1,2,\dots,n\}$. Throughout this paper, we only consider
connected directed graphs in which every vertex is an integer and every directed
edge is of the form $(i,j)$ with $i<j$.

Let $G$ be a directed graph on vertex set $[n+1]$ with $m$ directed edges. We
allow $G$ to have multiple edges but no loops. Let $\vec{a}=(a_1,a_2,\dots
,a_n)\in \ZZ^{n}$. An $m$-tuple $(b_{ij})_{(i,j)\in E}\in \mathbb{R}_{\geq 0}^m$
is called an \emph{$\vec{a}$-flow of $G$} if
\[\sum_{(i,j)\in E}b_{ij}(\vec{e}_i-\vec{e}_j)=\left(a_1,\dots,a_n, -\sum_{i=1}^n a_i\right),\]
where $\vec{e}_i$ is the standard basis vector in $\mathbb{R}^{n+1}$ with a one in the $i$th entry and zeroes elsewhere. The \emph{flow polytope $\FF_{G}(\vec{a})$ of $G$ with net flow $\vec{a}$} is defined as the set of all $\vec{a}$-flows of $G$.

In this paper we consider the following two graphs, see Figures~\ref{fig:PS} and \ref{fig:Car}:
\begin{itemize}
\item The \emph{Pitman-Stanley graph} $\PS_{n+1}$ is the graph with vertex set $[n+1]$ and edge set \[
\{(i,i+1): i=1,2,\dots, n\}\cup\{(i,n+1):i=1,2,\dots ,n-1\}.
\]
\item The \emph{Caracol graph} $\Car_{n+1}$ is the graph with vertex set $[n+1]$ and edge set
\[
\{(i,i+1):i=1,2,\dots,n\}\cup\{(1,i):i=3,4,\dots,n\}\cup\{(i,n+1):i=2,3,\dots,n-1\}.
\]
\end{itemize}

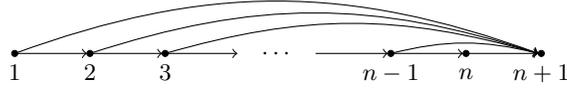
\begin{figure}
        \centering
        \begin{tikzpicture}
  \draw[->] (0,0) -- (0.96,0);
  \draw[->] (1,0) -- (1.96,0);
  \draw[->] (2,0) -- (2.96,0);
  \draw[->] (4,0) -- (4.96,0);
  \draw[->] (5,0) -- (5.96,0);
  \draw[->] (6,0) -- (6.96,0);
  \draw[->] (0,0) to [out=20,in=160] (6.96,0);
  \draw[->] (1,0) to [out=18,in=162] (6.96,0);
  \draw[->] (2,0) to [out=16,in=164] (6.96,0);
  \draw[->] (5,0) to [out=14,in=166] (6.96,0);
  \draw [fill] (0,0) circle [radius=0.04]; \node[below] at(0,0) {1};
  \draw [fill] (1,0) circle [radius=0.04]; \node[below] at(1,0) {2};
  \draw [fill] (2,0) circle [radius=0.04]; \node[below] at(2,0) {3};
  \draw [fill] (5,0) circle [radius=0.04]; \node[below] at(5,0) {$n-1$};
  \draw [fill] (6,0) circle [radius=0.04]; \node[below] at(6,-0.06) {$n$};
  \draw [fill] (7,0) circle [radius=0.04]; \node[below] at(7,0) {$n+1$};
  \node at (3.5,0) {$\cdots$};
\end{tikzpicture}
\caption{The Pitman-Stanley graph $\PS_{n+1}$.}
\label{fig:PS}
\end{figure}

\begin{figure}
        \centering
        \begin{tikzpicture}
  \draw[->] (0,0) -- (0.96,0);
  \draw[->] (1,0) -- (1.96,0);
  \draw[->] (2,0) -- (2.96,0);
  \draw[->] (4,0) -- (4.96,0);
  \draw[->] (5,0) -- (5.96,0);
  \draw[->] (6,0) -- (6.96,0);
  \draw[->] (1,0) to [out=18,in=162] (6.96,0);
  \draw[->] (2,0) to [out=16,in=164] (6.96,0);
  \draw[->] (5,0) to [out=14,in=166] (6.96,0);
  \draw[->] (0,0) to [out=-14,in=-166] (1.96,0);
  \draw[->] (0,0) to [out=-16,in=-164] (4.96,0);
  \draw[->] (0,0) to [out=-18,in=-162] (5.96,0);
  \draw [fill] (0,0) circle [radius=0.04]; \node[above] at(0,0) {1};
  \draw [fill] (1,0) circle [radius=0.04]; \node[above] at(1,0) {2};
  \draw [fill] (2,0) circle [radius=0.04]; \node[above] at(2,0) {3};
  \draw [fill] (5,0) circle [radius=0.04]; \node[below] at(5,0) {$n-1$};
  \draw [fill] (6,0) circle [radius=0.04]; \node[below] at(6,-0.06) {$n$};
  \draw [fill] (7,0) circle [radius=0.04]; \node[below] at(7,0) {$n+1$};
  \node at (3.5,0) {$\cdots$};
\end{tikzpicture}
        \caption{The caracol graph $\Car_{n+1}$.}
\label{fig:Car}
\end{figure}
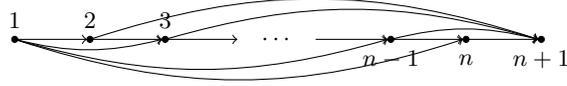

We note that the flow polytope $\FF_{\PS_{n+1}}(a_1,\dots,a_n)$ are affinely
equivalent to the polytope
\[
  \Pi_{n-1}(a_1,\dots,a_{n-1}):= \{(x_1,\dots,x_{n-1}): x_i\ge0, 
  x_1+\dots+x_{i}\le a_1+\dots+a_{i}, 1\le i\le n-1\},
\]
considered in \cite{PitmanStanley}. Pitman and Stanley \cite{PitmanStanley} found volume formulas for
certain polytopes, which can be restated as normalized volumns of 
 flow polytopes as follows:
\begin{align}
  \label{eq:PS11}
  \vol\FF_{\PS_{n+1}}(a,b^{n-2},d) &= a(a+(n-1)b)^{n-2},  \\
  \label{eq:PS12}
  \vol\FF_{\PS_{n+1}}(a,b^{n-3},c,d) &= a(a+(n-1)b)^{n-2} + (n-1)a(c-b)(a+(n-2)b)^{n-3}, \\
  \label{eq:PS13}
  \vol\FF_{\PS_{n+1}}(a,b^{n-m-2},c,0^{m-1},d) &= a\sum_{j=0}^m \binom nj (c-(m+1-j)b)^j(a+(n-1-j)b)^{n-j-2},
\end{align}
where $b^k$ means the sequence $b,b,\dots,b$ of $k$ $b$'s. We note that
$\vol\FF_{\PS_{n+1}}(a_1,\dots,a_{n})$ is independent of $a_n$.

In \cite{Benedetti2019}, Benedetti et al.~introduced combinatorial models called
gravity diagrams and unified diagrams to compute volumes of flow polytopes.
Using these models they showed
\begin{align}
  \label{eq:5}
\vol\FF_{\Car_{n+1}}(a^n) &=C_{n-2}a^n n^{n-2},\\
\vol\FF_{\Car_{n+1}}(a,b^{n-1})&=C_{n-2} a^{n-2}(a+(n-1)b)^{n-2},
\end{align}
where $C_k:=\frac{1}{k+1}\binom{2k}{k}$ is the $k$th Catalan number.

For a positive integer $k$ and a directed graph $G$ on $[n+1]$, let
$\GG(k)$ be the directed graph obtained from $G$ by adding a new
vertex $0$ and $k$ multiple edges $(0,i)$ for each $1\le i\le
n+1$. Then we define
\begin{equation}
  \label{eq:EG}
E_G(k) = \vol\mathcal{F}_{\GG(k)}(1,0^{n}).  
\end{equation}

In \cite{Benedetti2019}, Benedetti et al.~showed that $E_G(k)$ is a polynomial
function in $k$. Therefore we can consider the polynomial $E_G(x)$. They also
showed that these polynomials $E_G(x)$ have similar properties as Ehrhart
polynomials. For example, the leading coefficient of $E_G(x)$ is the normalized
volume of $\FF_G(1^n)$. For this reason, they called $E_G(x)$ an Ehrhart-like
polynomial. In the same paper they proved the following theorem.

\begin{theorem}\label{thm:PS}
We have
  \[
    E_{\PS_{n+1}}(k)=\frac{1}{kn-1}\binom{(k+1)n-2}{n}.  
  \]
\end{theorem}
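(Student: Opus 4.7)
I will prove Theorem~\ref{thm:PS} by subdividing $\FF_{\widehat{\PS}_{n+1}(k)}(1,0^n)$ over the vector of total flows leaving vertex $0$, reducing the volume via Pitman--Stanley and a Dirichlet integral to a weighted ballot sum, and then evaluating that sum by Lagrange inversion.

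Write $y_i$ for the total flow from vertex $0$ to vertex $i$; then $\vec{y}=(y_1,\dots,y_{n+1})$ lies on the standard $n$-simplex $\Delta_n=\{\vec{y}\in\mathbb{R}^{n+1}_{\ge 0}:\sum y_i=1\}$, and the fibre over each $\vec{y}$ factors as the product of the $n+1$ transport simplices $\{(x_i^{(1)},\dots,x_i^{(k)})\in\mathbb{R}^k_{\ge 0}:\sum_j x_i^{(j)}=y_i\}$, each of lattice volume $y_i^{k-1}/(k-1)!$, with the Pitman--Stanley flow polytope $\FF_{\PS_{n+1}}(y_1,\dots,y_n)$. Substituting the Pitman--Stanley formula $\vol\FF_{\PS_{n+1}}(\vec{y})=\sum_{\vec{s}}\binom{n-1}{\vec{s}}y_1^{s_1}\cdots y_{n-1}^{s_{n-1}}$ (summed over ballot sequences $\vec{s}\in\ZZ_{\ge 0}^{n-1}$ with $\sum_i s_i=n-1$ and $s_1+\cdots+s_i\ge i$ for all $i$), integrating term-by-term via the Dirichlet identity $\int_{\Delta_n}\prod y_i^{a_i-1}\,d\sigma=\prod\Gamma(a_i)/\Gamma(\sum a_i)$, and multiplying by $d!$ (where $d=k(n+1)+n-2$ is the polytope dimension) to pass to the normalised volume, all factorial factors collapse, leaving
\[
E_{\PS_{n+1}}(k) \;=\; \sum_{\vec{s}}\,\prod_{i=1}^{n-1}\binom{s_i+k-1}{s_i}.
\]

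To evaluate this sum in closed form I append a trivial $s_n=0$ (contributing $\binom{k-1}{0}=1$) so that the extended ballot condition on $(s_1,\dots,s_n)$ becomes the classical characterisation of DFS child-degree sequences of plane trees on $n$ nodes; thus $E_{\PS_{n+1}}(k)$ enumerates plane trees on $n$ nodes with each node $v$ weighted by $\binom{\deg(v)+k-1}{\deg(v)}$. Setting $\phi(t)=(1-t)^{-k}=\sum_s\binom{s+k-1}{s}t^s$ and $F(x)=x\phi(F(x))$, this count is exactly $[x^n]F(x)$, so Lagrange inversion delivers
\[
[x^n]F(x)=\tfrac{1}{n}[t^{n-1}](1-t)^{-kn}=\tfrac{1}{n}\binom{(k+1)n-2}{n-1}=\tfrac{1}{kn-1}\binom{(k+1)n-2}{n}.
\]

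\textbf{Main obstacle.} The only delicate step will be the normalisation bookkeeping in the subdivision, where the $(n-1)!$ from Pitman--Stanley, the $((k-1)!)^{n+1}$ from the transport simplices, the Dirichlet $\Gamma$-factors and the dimension-$d!$ normalisation must cancel to produce the clean weighted ballot sum above. Once the sum is obtained, recognising it as a plane-tree enumeration and applying Lagrange inversion is essentially immediate.
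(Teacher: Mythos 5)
Your proof is correct, and it genuinely differs from the paper's, although the two arguments meet in the middle at the same sum. The paper reaches $\sum_{\vec s}\prod_i\multiset{k}{s_i}$ (in its language, $|\LD_{n-1}(k,0)|$) by applying the Lidskii formula in the degenerate case $\vec a=(1,0^n)$ to get a single Kostant partition function, rewriting it as a constant term, and extracting that constant term by hand (Proposition~\ref{prop:car} and Theorem~\ref{Thm:PS=LD}); you reach the identical sum by fibering $\FF_{\GG(k)}(1,0^n)$ over the total outflows $\vec y$ from vertex $0$ and integrating the Pitman--Stanley volume polynomial against the Dirichlet kernel. Your flagged ``main obstacle'' does resolve cleanly: the term of the integrand indexed by $\vec s$ has exponent vector $(k+s_1,\dots,k+s_{n-1},k,k)$ summing to $d+1$, so the Dirichlet integral contributes $((k-1)!)^2\prod_{i=1}^{n-1}(k+s_i-1)!/d!$, and against the prefactors $\binom{n-1}{\vec s}/\bigl((n-1)!\,((k-1)!)^{n+1}\bigr)$ and the final $d!$ everything collapses to $\prod_{i=1}^{n-1}\binom{s_i+k-1}{s_i}$ as you claim (one should note that choosing one representative edge per parallel bundle together with the non-tree edges of $\PS_{n+1}$ gives a unimodular coordinate system, so Euclidean and lattice volumes agree in your parametrization). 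For the evaluation, the paper proves the refined count $|\LD_n(k;a_0,\dots,a_k)|=\frac{1}{n+1}\prod_i\Multiset{n+1}{a_i}$ via a cycle lemma and specializes, whereas you recognize the ballot sum as a plane-tree count weighted by $\phi(t)=(1-t)^{-k}$ and apply Lagrange inversion; these are cousins (Lagrange inversion is itself a cycle-lemma statement), and yours is shorter for this theorem, but the paper's refined Theorem~\ref{thm:dyck} is what it needs later for the doubly labeled Dyck paths of the caracol case, which your single generating function $F=x\phi(F)$ would not capture without reworking.
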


Our main result is the following theorem, which was conjectured in \cite{Benedetti2019}.
\begin{theorem}\label{thm:main}
We have
\[E_{\Car_{n+1}}(k)=\frac{1}{kn+n-3}\binom{kn+2n-5}{n-1}\binom{n+k-3}{k-1}.\]  
\end{theorem}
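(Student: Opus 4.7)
The plan is to follow the three-pronged strategy advertised in the abstract: rewrite $E_{\Car_{n+1}}(k)$ as a constant term, interpret that constant term as a weighted count of labeled Dyck paths, and then apply a cyclic lemma to collapse the count to the desired product formula.

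First I would use the Postnikov--Stanley / Baldoni--Vergne formula that expresses the volume of a flow polytope as a signed sum of values of the Kostant partition function of the graph. Applied to $\widehat{\Car_{n+1}}(k)$ with net flow $(1,0^{n+1})$, this gives $E_{\Car_{n+1}}(k)$ as a sum over compositions of vectors $(j_1,\dots,j_n)$ weighted by products of binomial coefficients reflecting the in-degrees of the caracol graph. I would then convert this sum to a single constant term by encoding each edge $(i,j)$ of $\widehat{\Car_{n+1}}(k)$ as a factor $(1-x_i/x_j)^{-1}$ (or $(1-x_j)^{-1}$ for edges from the added vertex $0$), producing an expression of the Morris type. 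Here the special structure of $\Car_{n+1}$, in which vertex $1$ is a universal source for $\{3,\dots,n\}$ and vertex $n+1$ is a universal sink for $\{2,\dots,n-1\}$, should let this constant term collapse to one in a small number of variables.

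Second, I would extract the coefficient combinatorially. After expanding $(1-x_i/x_j)^{-1}$ as a geometric series, the surviving monomials are indexed by sequences of non-negative integers that describe how much flow traverses each edge, which in turn should be in bijection with labeled Dyck paths: the up-steps would be labeled by the $k$ choices coming from the edges out of vertex $0$, and the down-steps would carry labels recording which of the caracol's edges from vertex $1$ or into vertex $n+1$ absorb the flow. The binomial $\binom{n+k-3}{k-1}$ is suggestive of a multiset choice of $k-1$ up-step labels from $n-1$ possibilities, which fits this picture, while $\binom{kn+2n-5}{n-1}$ should count a certain ballot-like sequence of total length $kn+2n-5$ with $n-1$ distinguished positions.

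Third, I would deploy a cyclic (Dvoretzky--Motzkin) argument: the factor $\tfrac{1}{kn+n-3}$ in the theorem is the signature of such a lemma, indicating that the labeled lattice sequences of length $kn+n-3$ counted by the binomial product above admit a free cyclic rotation action whose orbits contain exactly one representative corresponding to a genuine labeled Dyck path. I expect the main obstacle to be precisely this cyclic step: one must define the rotation action on the labeled objects so that (a) it acts freely and (b) the notion of "Dyck" (first return/positivity) is exactly the ``good rotation'' condition singled out in each orbit. Equivalently, on the constant-term side, one has to verify an algebraic identity that the cyclic symmetry imposes on the Morris-type integral; this is the calculation I would expect to be the most delicate, since the two labelings (edges from $0$ versus edges from $1$) interact in the rotation and standard cyclic lemmas apply to singly labeled objects. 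Once this is set up, matching the resulting product with $\frac{1}{kn+n-3}\binom{kn+2n-5}{n-1}\binom{n+k-3}{k-1}$ should be a direct bookkeeping step.
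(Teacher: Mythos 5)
Your overall architecture---Lidskii formula, then a constant term, then labeled Dyck paths, then a cyclic lemma---is exactly the paper's, and your first two prongs are sound in outline. One small correction there: with net flow $(1,0^{n+1})$ the Lidskii sum has a \emph{single} term, so $E_{\Car_{n+1}}(k)$ is one Kostant partition function value, not a signed sum; writing each edge as a geometric series then yields a constant term of the form
\[
\CT_{x_{n}}\dots \CT_{x_{1}}\frac{1}{x_{1}}\prod_{i=1}^{n}(1-x_{i})^{-k}\prod _{i=1}^{n-1}(x_{n}-x_{i})^{-1}\prod _{i=1}^{n-2}(x_{i+1}-x_{i})^{-1},
\]
which the paper identifies with the number of \emph{doubly} $k$-labeled Dyck paths $\DLD_{n-1}(k)$: the factors $(1-x_i)^{-k}$ produce the weakly decreasing down-step labels in $\{0,\dots,k\}$, and the factors $(x_n-x_i)^{-1}$ produce a second, weakly increasing labeling on the up-steps and the $0$-labeled down-steps. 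Your description of which edges produce which labels is close enough to this that I believe you could carry it out.

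The genuine gap is in your third prong. You read the prefactor $\frac{1}{kn+n-3}$ as the order of a free cyclic action on objects of length $kn+n-3$, and you defer the final match to ``direct bookkeeping.'' That is not how the count works, and I do not see how to make it work as stated: in the paper the cycle lemma is the standard one of order $n$ (one more than the number of up-steps of a path of length $2(n-1)$), applied only after the second labeling has been stripped off, giving $|\LD_{n-1}(k;a_0,\dots,a_k)|=\frac{1}{n}\prod_{i=0}^{k}\multiset{n}{a_i}$ and hence $|\LD_{n-1}(k,d)|=\frac{1}{n}\multiset{n}{d}\multiset{kn}{n-1-d}$. This is precisely the maneuver you yourself flag as problematic (``standard cyclic lemmas apply to singly labeled objects''), and the paper's resolution is not a cleverer rotation but a decoupling: the second labeling contributes an independent multiset factor, so $|\DLD_{n-1}(k)|=\sum_{d}|\LD_{n-1}(k,d)|\multiset{k}{n-1+d}$. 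The denominator $kn+n-3$ only appears after this sum over $d$ is recognized as a terminating ${}_2F_1$ and evaluated by the Chu--Vandermonde summation. So the crucial closing step of your argument is either a nontrivial hypergeometric identity you have not stated, or a new free cyclic action of order $kn+n-3$ on doubly labeled objects that you would have to construct and verify; as written, neither is supplied, and the proof is incomplete at exactly the point you predicted would be delicate.
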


In this paper we prove both Theorems~\ref{thm:PS} and \ref{thm:main}. 

The remainder of this paper is organized as follows.
 In Section~\ref{sec:const-term-ident} we use the Lidskii volume
formula to interpret $E_{\Car_{n+1}}(k)$ as a Kostant partition function, which is equal to the constant term of a
Laurent series.  In Section~\ref{sec:labeled-dyck-paths} we introduce labeled Dyck paths and show that the constant term
is equal to the number of certain labeled Dyck paths.  In Section~\ref{sec:cyclic-lemma} we enumerate these labeled Dyck
paths using a cyclic lemma.  In Section~\ref{sec:more-prop-label}
using our combinatorial models we  show the following volume formulas:
\begin{align}
\label{eq:PS3}
\vol\FF_{\PS_{n+1}}(a,b,c^{n-2}) & =(a+b-c)(a+b+(n-2)c)^{n-2}+(-b+c)(b+(n-2)c)^{n-2},\\
\label{eq:PS4}
    \vol\FF_{\PS_{n+1}}(a,b,c,d^{n-3})&=(a+b+c-2d)(a+b+c+(n-3)d)^{n-2} \\
\notag   & \qquad \qquad  -(b+c-2d)(b+c+(n-3)d)^{n-2}\\ 
\notag&\qquad\qquad -(n-1)a(c-d)(c+(n-3)d)^{n-3} ,\\
\label{eq:conj}
\vol\FF_{\Car_{n+1}}(a,b,c^{n-2}) &=C_{n-2}a^{n-1}(a+b(n-1))(a+b+c(n-2))^{n-3},
\end{align}
where \eqref{eq:conj} was conjectured by Benedetti et al. in \cite{Benedetti2019}.

\section{Constant term identities}
\label{sec:const-term-ident}

In this section we review the Lidskii volume formula and restate
Theorems~\ref{thm:PS} and \ref{thm:main} as constant term identities.

Let $G$ be a directed graph on $[n+1]$ and $\vec a\in\ZZ^n$. The \emph{Kostant partition function} $K_G(\vec{a})$ of $G$
at $\vec{a}$ is the number of integer points of $\FF_{G}(\vec{a})$, \emph{i.e.}, if $G$ has $m$ edges,
\[
  K_G(\vec{a})=|\FF_G(\vec{a}) \cap \ZZ^m|.
\]
We denote by $G|_{n}$ the restriction of $G$ to the vertices in $[n]$. Let $\outdeg(i)$ denote the out-degree of vertex
$i$ in $G$. The following formula, known as the Lidskii volume formula, allows us to express the (normalized) volume of
the flow polytope $\FF_G(\vec{a})$ in terms of Kostant partition functions, see \cite[Theorem~38]{Baldoni2008}.

\begin{theorem}[Lidskii volume formula] \label{thm:lidskii}
  Let $G$ be a connected directed graph on $[n+1]$ with $m$ directed edges, where every directed edge is of the form
  $(i, j)$ with $i<j$ and let $\vec{a}=(a_1,a_2,\dots ,a_n)\in \ZZ^{n}$. Denoting
  $\vec{t}=(t_1,\dots ,t_n):=(\outdeg(1)-1,\dots,\outdeg(n)-1)$, we have
\begin{equation*}
    \vol\FF_{G}(\vec a) = \sum_{\substack{|\vec s|=m-n\\ \vec{s}\ge \vec t}}\binom{m-n}{s_{1},s_{2},\dots ,s_{n}}a_{1}^{s_1}\dots a_{n}^{s_n}K_{G|_{n}}(\vec{s}-\vec{t}),
\end{equation*}
where the sum is over all sequences $\vec{s}=(s_1,\dots ,s_n)$ of nonnegative integers such that
$|\vec s|=s_1+\dots+s_n=m-n$ and $\vec{s}\geq \vec{t}$ in dominance order, \emph{i.e.}, $\sum_{i=1}^{k}s_{i} \geq \sum_{i=1}^{k}t_{i}$ for $k=1,2,\dots,n$.
\end{theorem}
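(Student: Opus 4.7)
The plan is to derive the Lidskii formula by extracting top-degree asymptotics of lattice point counts under scaling. The starting observation is the Ehrhart-type relation
\[
K_G(N\vec a) = \frac{N^{m-n}}{(m-n)!}\vol\FF_G(\vec a) + O(N^{m-n-1}),
\]
which holds because $\FF_G(\vec a)$ is an $(m-n)$-dimensional rational polytope that scales by $N$ under $\vec a\mapsto N\vec a$. Thus identifying $\vol\FF_G(\vec a)$ reduces to computing the leading $N^{m-n}$-coefficient of a polynomial expression for $K_G(N\vec a)$.

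To produce such an expression I use the multivariate generating function
\[
\sum_{\vec a\in\ZZ^{n+1}} K_G(\vec a)\, z_1^{a_1}\cdots z_{n+1}^{a_{n+1}}
=\prod_{(i,j)\in E(G)} \frac{1}{1-z_iz_j^{-1}},
\]
valid as a formal expansion in the appropriate region. I factor the right-hand side into edges of $G|_n$ and edges $(i,n+1)$ incident to the sink. The first factor is the generating function for $K_{G|_n}$, while the second I expand as $\prod_{(i,n+1)\in E}\sum_{k\ge 0}(z_i/z_{n+1})^k$ and reindex by the aggregate outflow $s_i$ leaving each internal vertex $i$. The shift $t_i=\outdeg(i)-1$ arises because one outgoing edge at $i$ carries the baseline flow within $G|_n$, so the residual flow on $G|_n$ has net flow vector $\vec s-\vec t$. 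Extracting the coefficient of $z_1^{Na_1}\cdots z_n^{Na_n} z_{n+1}^{-N\sum a_i}$ collapses to a sum
\[
K_G(N\vec a)=\sum_{\vec s} F(N,\vec a,\vec s)\, K_{G|_n}(\vec s-\vec t),
\]
in which $F(N,\vec a,\vec s)$ packages the combinatorial weights coming from distributing sink-bound flow among the outgoing edges at each vertex.

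The principal obstacle is showing that the top-degree-in-$N$ part of $F(N,\vec a,\vec s)$ is exactly $\binom{m-n}{s_1,\dots,s_n}a_1^{s_1}\cdots a_n^{s_n} N^{m-n}/(m-n)!$. This amounts to a multinomial identity that counts the interleavings of the $s_i$ extra flow units leaving each vertex $i$ into an ordered sequence of total length $m-n$. A secondary technical point is the restriction of the sum to $\vec s\geq\vec t$ in dominance order, which is enforced automatically by the support of $K_{G|_n}$: a nonnegative integer flow on $G|_n$ with net flow vector $\vec s-\vec t$ exists only when the partial sums $\sum_{i\le k}(s_i-t_i)$ are nonnegative, which is exactly the stated dominance condition. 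Once these two points are verified, matching the $N^{m-n}$ coefficients of both sides and multiplying by $(m-n)!$ produces the Lidskii volume formula as stated.
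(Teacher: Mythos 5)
You should first be aware that the paper does not prove this statement at all: it is imported verbatim from Baldoni--Vergne \cite[Theorem~38]{Baldoni2008}, so there is no internal proof to compare against and your proposal has to stand on its own. Two of your three ingredients are sound: the Ehrhart reduction $K_G(N\vec a)=\frac{N^{m-n}}{(m-n)!}\vol\FF_G(\vec a)+O(N^{m-n-1})$ is legitimate (flow polytopes are lattice polytopes, being cut out by a totally unimodular system), and the observation that $K_{G|_n}(\vec s-\vec t)=0$ unless $\vec s\ge\vec t$ in dominance order is correct, since all edges of $G|_n$ point forward and the cut between $[k]$ and $[n]\setminus[k]$ forces $\sum_{i\le k}(s_i-t_i)\ge 0$.

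The gap is the step you call "reindexing by the aggregate outflow." Splitting the product over edges into the $G|_n$-part and the sink-part genuinely yields
\[
K_G(\vec a)\;=\;\sum_{\vec c\ge 0}\Bigl(\prod_{i=1}^{n}\tbinom{\mu_i+c_i-1}{c_i}\Bigr)\,K_{G|_n}(\vec a-\vec c),
\]
where $c_i$ is the total flow from $i$ into the sink and $\mu_i$ the number of parallel edges $(i,n+1)$. But here $|\vec c|=|\vec a|$, so after the dilation $\vec a\mapsto N\vec a$ the sum runs over a set of $\vec c$'s of size $\Theta(N^{n-1})$ and the arguments $N\vec a-\vec c$ of $K_{G|_n}$ sweep a region that grows with $N$; nothing collapses to a finite sum over compositions $\vec s$ of the fixed number $m-n$ weighted by $K_{G|_n}(\vec s-\vec t)$. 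Extracting the $N^{m-n}$-coefficient from this expression amounts to integrating the piecewise-polynomial function $K_{G|_n}$ over a dilating polytope of $\vec c$'s --- essentially the original volume problem in disguise --- and is not the "multinomial interleaving identity" you describe. What your outline is implicitly reaching for is the Ehrhart--Lidskii identity $K_G(\vec a)=\sum_{\vec s}\prod_{i=1}^n\binom{a_i+t_i}{s_i}K_{G|_n}(\vec s-\vec t)$, from which your leading-term argument would indeed deliver the volume formula; but that identity is the actual content of the theorem, and its known proofs require either iterated (Jeffrey--Kirwan) residues as in Baldoni--Vergne, or an explicit subdivision of $\FF_G(\vec a)$ into products of simplices indexed by the compositions $\vec s$ (equivalently, the Minkowski-sum decomposition $\FF_G(\vec a)=a_1\FF_G(\vec e_1)+\dots+a_n\FF_G(\vec e_n)$ together with mixed volumes), as in M\'esz\'aros--Morales. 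No mechanism of either kind appears in your sketch, so the proof is not complete.
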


Note that if $\vec a = (1,0^n)$ in Theorem~\ref{thm:lidskii} there is only one term in the sum giving the following
corollary, see \cite{Baldoni2008}, \cite{PitmanStanley}, or \cite[Corollary~1.4]{VolumesandEhrhart}.

\begin{corollary}
For a directed graph $G$ on $[n+1]$, we have
\begin{equation}
  \label{eq:outdeg}
\vol \FF_G(1,0^n) = K_G(p,1-\outdeg(2),1-\outdeg(3),\dots,1-\outdeg(n),0),  
\end{equation}
where $p=\outdeg(2)+\outdeg(3)+\cdots+\outdeg(n)-n+1$.
\end{corollary}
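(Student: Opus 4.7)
The plan is to specialize the Lidskii formula in Theorem~\ref{thm:lidskii} to the net flow vector $\vec{a}=(1,0^{n})$. First I would observe that the monomial $a_{1}^{s_{1}}a_{2}^{s_{2}}\cdots a_{n}^{s_{n}}$ equals $0^{s_{2}}\cdots 0^{s_{n}}$, which vanishes unless $s_{2}=s_{3}=\cdots=s_{n}=0$. The constraint $|\vec{s}|=m-n$ then forces $s_{1}=m-n$, so the whole sum in Theorem~\ref{thm:lidskii} collapses to the single term $\vec{s}=(m-n,0,\dots,0)$, whose multinomial coefficient is $1$.

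Next I would check that this $\vec{s}$ does satisfy the dominance condition $\vec{s}\ge\vec{t}$. Using $\sum_{i=1}^{n}\outdeg(i)=m$ (every edge is outgoing from some vertex of $[n]$), we get $\sum_{i=1}^{n}t_{i}=m-n=\sum_{i=1}^{n}s_{i}$, so equality holds at $k=n$. For $k<n$, the inequality $\sum_{i=1}^{k}s_{i}\ge\sum_{i=1}^{k}t_{i}$ reduces to $\sum_{i=k+1}^{n}\outdeg(i)\ge n-k$, which holds whenever each vertex $i\in\{2,\dots,n\}$ has positive outdegree (and otherwise $\FF_{G}(1,0^{n})$ is empty, so the identity is trivial).

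Finally I would compute $\vec{s}-\vec{t}$ componentwise. Its first component is
\[
  s_{1}-t_{1}=(m-n)-(\outdeg(1)-1)=m-\outdeg(1)-n+1=\sum_{i=2}^{n}\outdeg(i)-n+1=p,
\]
again using $m=\sum_{i=1}^{n}\outdeg(i)$; its $i$th component for $i=2,\dots,n$ is $-t_{i}=1-\outdeg(i)$. Thus Theorem~\ref{thm:lidskii} yields $\vol\FF_{G}(1,0^{n})=K_{G|_{n}}(p,1-\outdeg(2),\dots,1-\outdeg(n))$. Matching this with the statement amounts to identifying $K_{G|_{n}}(\vec{b})$ with $K_{G}(\vec{b},0)$: since the last entry of the net flow for $G$ is $0$, no flow is routed into vertex $n+1$, so adjoining $n+1$ and the edges into it does not change the integer-flow count. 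This gives \eqref{eq:outdeg}. I expect no real obstacle beyond this notational bookkeeping; the proof is essentially a one-line specialization of the Lidskii formula.
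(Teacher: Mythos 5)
Your proof is correct and is exactly the paper's (one-sentence) argument: with $\vec a=(1,0^n)$ the Lidskii sum collapses to the single surviving term $\vec s=(m-n,0,\dots,0)$, and $\vec s-\vec t$ is the stated net flow vector, with the passage from $K_{G|_n}$ to $K_G(\cdot,0)$ being the routine observation that zero net flow at the sink forces zero flow on all edges into $n+1$. The only blemish is the parenthetical claim that $\FF_G(1,0^n)$ is empty when some vertex in $\{2,\dots,n\}$ has outdegree $0$ --- it need not be empty (it can be a single point, of normalized volume $0$), though both sides of \eqref{eq:outdeg} do vanish in that degenerate case, so the identity still holds.
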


For a multivariate rational function $f(x_1,x_2,\dots,x_n)$ we denote by
$\CT_{x_i}f$ the constant term of the Laurant series expansion of $f$ with
respect to $x_i$ by considering other variables as constants. Since $\CT_{x_1}
f$ is a rational function in $x_2,\dots,x_n$, we can apply $\CT_{x_2}$ to it.
Repeating in this way the constant term $\CT_{x_{n}}\dots \CT_{x_{1}} f$ is
defined. We also define $[x_n^{a_n}\dots x_1^{a_1}]f$ to be the coefficient of the
monomial $x_n^{a_n}\dots x_1^{a_1}$ in the Laurent expansion of $f$ when
expanded in the variables $x_1, x_2,\dots,x_n$ in this order. Note that we have
\begin{equation}
  \label{eq:1}
[x_n^{a_n}\dots x_1^{a_1}]f = \CT_{x_{n}}\dots \CT_{x_{1}} \left(x_n^{-a_n}\dots x_1^{-a_1} f\right).  
\end{equation}

Let $G$ be a directed graph on $[n+1]$.
Then for  $\vec a=(a_1,\dots,a_{n}) \in \ZZ^{n}$
and $a_{n+1}=-(a_1+\cdots+a_n)$,  the Kostant partition function $K_G(\vec a)$ can be computed by
\begin{equation}
  \label{eq:KG}
K_G(\vec a) = [x_{n+1}^{a_{n+1}} \cdots x_1^{a_1}] \prod_{(i,j)\in E(G)}\left( 1-\frac{x_i}{x_j}\right)^{-1}.  
\end{equation}

Now we are ready to express $E_{\PS_{n+1}}(k)$ and $E_{\Car_{n+1}}(k)$ as
constant terms of Laurent series.
Throughout this paper the factor $(x_j-x_i)^{-1}$, where $i<j$, means the
Laurent expansion
\[
(x_j-x_i)^{-1} = \frac{1}{x_j} \left( 1-\frac{x_i}{x_j} \right)^{-1}
= \frac{1}{x_j}\sum_{l\ge0} \left(  \frac{x_i}{x_j}\right)^l.
\]

\begin{proposition}
\label{prop:car}
We have
\begin{align}
\label{eq:CT1}
E_{\PS_{n+1}}(k) &=\CT_{x_{n}}\dots \CT_{x_{1}}\prod_{i=1}^{n}(1-x_{i})^{-k}\prod _{i=1}^{n-1}(x_{i+1}-x_{i})^{-1},\\
\label{eq:CT2}
E_{\Car_{n+1}}(k) &=    \CT_{x_{n}}\dots \CT_{x_{1}}\frac{1}{x_{1}}\prod_{i=1}^{n}(1-x_{i})^{-k}\prod _{i=1}^{n-1}(x_{n}-x_{i})^{-1}\prod _{i=1}^{n-2}(x_{i+1}-x_{i})^{-1}.  
\end{align}

\end{proposition}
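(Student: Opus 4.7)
The plan is to reduce both identities to the Lidskii corollary~\eqref{eq:outdeg}. Since $E_G(k)=\vol\FF_{\GG(k)}(1,0^{n+1})$ by~\eqref{eq:EG}, that corollary rewrites $E_G(k)$ as a single Kostant partition function value, and~\eqref{eq:KG} turns this value into a coefficient extraction from the edge product $\prod_{(i,j)\in E(\GG(k))}(1-x_i/x_j)^{-1}$ in variables $x_0,x_1,\dots,x_{n+1}$.

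An out-degree computation produces the relevant Kostant arguments. For $G=\PS_{n+1}$ the out-degrees of $\GG(k)$ at vertices $0,1,\dots,n+1$ are $k(n+1),2,2,\dots,2,1,0$, which via~\eqref{eq:outdeg} produces the vector $(n-1,-1,-1,\dots,-1,0,0)$; for $G=\Car_{n+1}$ they are $k(n+1),n-1,2,\dots,2,1,0$, producing $(2n-4,2-n,-1,\dots,-1,0,0)$. In each case the edge product has a factor $\prod_{i=1}^{n+1}(1-x_0/x_i)^{-k}$ coming from the $k$ copies of the edges $(0,i)$, together with one factor $(1-x_i/x_j)^{-1}$ for each $G$-edge.

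Two reductions then bring the coefficient into the constant-term form claimed in~\eqref{eq:CT1} and~\eqref{eq:CT2}. First, every factor involving $x_{n+1}$ has the shape $(1-x_i/x_{n+1})^{-\alpha}$ and hence only non-positive powers of $x_{n+1}$, so taking $[x_{n+1}^0]$ simply removes these factors. Second, the remaining rational function in $x_0,\dots,x_n$ is homogeneous of degree zero, so the substitution $y_i=x_0/x_i$ converts it into a Laurent series in $y_1,\dots,y_n$ alone: the factor $(1-x_0/x_i)^{-k}$ becomes $(1-y_i)^{-k}$, the path factors $(1-x_i/x_{i+1})^{-1}$ become $(1-y_{i+1}/y_i)^{-1}$, and in the caracol case the factors $(1-x_1/x_i)^{-1}$ become $(1-y_i/y_1)^{-1}$, while the coefficient extraction transforms by $[x_0^{a_0}\prod x_i^{a_i}]F=[\prod y_i^{-a_i}]\tilde F(\vec y)$. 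A final relabeling $y_i\mapsto x_{n+1-i}$ puts the derived expression in the required form.

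The main source of difficulty is the bookkeeping: tracking the extraction exponents through the homogeneity substitution and relabeling, and verifying that the resulting multiset of rational factors, together with the monomial prefactor that survives from the extraction, matches the right-hand sides of~\eqref{eq:CT1} and~\eqref{eq:CT2}. No new combinatorial idea beyond Lidskii's formula and careful algebraic manipulation is required.
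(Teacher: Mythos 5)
Your proposal follows the paper's proof essentially verbatim: reduce $E_G(k)$ to a single Kostant partition function value via \eqref{eq:EG} and \eqref{eq:outdeg}, express it as a coefficient of the edge product via \eqref{eq:KG}, discard the factors involving $x_{n+1}$ because they contribute only non-positive powers, use homogeneity of degree zero to eliminate $x_0$, and invert and relabel the remaining variables; your out-degree and Kostant-vector computations $(n-1,-1,\dots,-1,0,0)$ and $(2n-4,2-n,-1,\dots,-1,0,0)$ agree with the paper's. The one divergence worth noting is that in the caracol case the paper also discards the factors involving $x_n$ (by the same non-positive-power argument used for $x_{n+1}$), whereas keeping them as you do leads to an $n$-variable expression with prefactor $x_n$ rather than $1/x_1$; this collapses to the paper's $(n-1)$-variable expression after observing that the surviving $x_1$-factors contribute only their constant terms, and in either route one lands on \eqref{eq:CT2} only up to the index shift $n\mapsto n-1$ that the paper's own proof also carries.
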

\begin{proof}
We will only prove \eqref{eq:CT2} since \eqref{eq:CT1} can be proved similarly.
Let $G=\Car_{n+1}$ and $H=\GG(k)$. Then $H$ is a graph with vertices 
$0,1,2,\dots,n+1$, and by \eqref{eq:EG} and \eqref{eq:outdeg},
\[
E_{\Car_{n+1}}(k)=K_H(p,1-\outdeg_H(1),1-\outdeg_H(2),\dots,1-\outdeg_H(n),0),
\]
where $p=\outdeg_H(1)+\outdeg_H(2)+\cdots+\outdeg_H(n)-n$.
Since $\outdeg_{H}(1)=n-1$, $\outdeg_{H}(n)=1$, and $\outdeg_{H}(i)=2$ for $2\le i\le n-1$, we can rewrite the above equation as
\[
E_{\Car_{n+1}}(k)=K_H(2n-4,2-n,(-1)^{n-2},0,0).
\]
Then, by \eqref{eq:KG}, we obtain
\begin{equation}
  \label{eq:3}
E_{\Car_{n+1}}(k)=  [x_{n-1}^{-1} \cdots x_2^{-1} x_1^{2-n} x_0^{2n-4}] \prod_{(i,j)\in E(H)}\left( 1-\frac{x_i}{x_j}\right)^{-1}.  
\end{equation}

Since every term in the expansion of 
\[
\prod_{(i,j)\in E(H)}\left( 1-\frac{x_i}{x_j}\right)^{-1}=
\prod_{i=1}^{n}\left( 1-\frac{x_0}{x_{i}} \right)^{-k}
\prod_{i=2}^n\left( 1-\frac{x_1}{x_{i}} \right)^{-1} \left( 1-\frac{x_i}{x_{n+1}} \right)^{-1}
\prod_{i=2}^{n-1}\left( 1-\frac{x_i}{x_{i+1}} \right)^{-1}
\]
is homogeneous of degree 0 in the variables $x_0,x_1,\dots,x_{n+1}$, we can set $x_{0}=1$ in \eqref{eq:3}.
Moreover, since every term in the expansion of $(1-x_i/x_{n+1})^{-1}$ has a negative power of $x_{n+1}$ except for the constant term $1$, we can omit 
the factors involving $x_{n+1}$ in \eqref{eq:3}. Then, by the same argument, we can also omit the factors involving $x_n$ in \eqref{eq:3} to obtain
\[
E_{\Car_{n+1}}(k) =[x_1^{2-n}x_2^{-1}\cdots x_{n-1}^{-1}] 
\prod_{i=1}^{n-1}\left( 1-\frac{1}{x_{i}} \right)^{-k}
\prod_{i=2}^{n-1}\left( 1-\frac{x_1}{x_{i}} \right)^{-1}
\prod_{i=2}^{n-2}\left( 1-\frac{x_i}{x_{i+1}} \right)^{-1}.
\]
By replacing $x_i$ by $x_{n-i}^{-1}$ for each $1\le i\le n-1$ we have
\[
E_{\Car_{n+1}}(k) = [x_{n-1}^{n-2} x_{n-2} \cdots x_1] 
\prod_{i=1}^{n-1}\left( 1-x_{i} \right)^{-k}
\prod_{i=1}^{n-2}\left( 1-\frac{x_i}{x_{n-1}} \right)^{-1}
\prod_{i=1}^{n-3}\left( 1-\frac{x_{i}}{x_{i+1}} \right)^{-1},
\]
which is equivalent to \eqref{eq:CT2} by \eqref{eq:1}.
\end{proof}

By Proposition \ref{prop:car}, we can restate Theorems~\ref{thm:PS} and \ref{thm:main} as follows. 

\begin{theorem}
\label{Thm:PS}
We have
\[
  \CT_{x_{n}}\dots \CT_{x_{1}}\prod_{i=1}^{n}(1-x_{i})^{-k}\prod _{i=1}^{n-1}(x_{i+1}-x_{i})^{-1} = \frac{1}{kn-1}\binom{(k+1)n-2}{n}.
\]
\end{theorem}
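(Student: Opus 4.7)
The plan is to expand the constant term into a combinatorial sum and then evaluate it via a cycle lemma, mirroring the strategy outlined for the main theorem (Sections~\ref{sec:labeled-dyck-paths} and~\ref{sec:cyclic-lemma}) in this simpler Pitman--Stanley setting.

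Expanding
\[
(1-x_i)^{-k} = \sum_{a_i\ge 0}\binom{k+a_i-1}{a_i}x_i^{a_i},\qquad (x_{i+1}-x_i)^{-1} = \sum_{\ell_i\ge 0}x_i^{\ell_i}x_{i+1}^{-\ell_i-1},
\]
multiplying out, and setting the total exponent of each $x_j$ to zero, I would get $a_1 = \ell_1 = 0$, the recursion $\ell_j = \ell_{j-1}+1-a_j$ for $2\le j\le n-1$, and $a_n = \ell_{n-1}+1$. Nonnegativity of $\ell_j$ yields the Dyck-type inequalities $a_2+\cdots+a_j \le j-1$ for $2\le j\le n-1$, and the terminal equation becomes $a_2+\cdots+a_n = n-1$. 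Hence the constant term reduces to
\[
\sum_{(a_2,\ldots,a_n)}\prod_{i=2}^n\binom{k+a_i-1}{a_i},
\]
summed over nonnegative tuples satisfying these constraints.

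Since $\binom{k+a_i-1}{a_i}$ equals the number of weak compositions of $a_i$ into $k$ parts, this sum counts pairs consisting of a Dyck-type path with column heights $a_2,\ldots,a_n$ together with a labeling of its up-steps by values in $\{1,\ldots,k\}$ that are weakly increasing within each column. This is the labeled-Dyck-path model to be developed in Section~\ref{sec:labeled-dyck-paths}. Rewriting the right-hand side as $\frac{1}{n}\binom{(k+1)n-2}{n-1}$ (via $\frac{1}{r}\binom{r+n-1}{n}=\frac{1}{n}\binom{r+n-1}{n-1}$ with $r=kn-1$) exhibits the classical Raney shape $\frac{1}{n}\binom{N}{n-1}$ with $N=(k+1)n-2$, so I would invoke a cycle lemma: embed each labeled Dyck path as one admissible cyclic rotation of a word of length $(k+1)n-2$ on two kinds of letters (say $n$ "column" markers and $kn-2$ "label" units) and apply the cycle lemma to extract the factor $\frac{1}{n}$.

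The main obstacle is calibrating the cyclic model so that the positivity condition coming from the cycle lemma corresponds exactly to the Dyck inequalities $a_2+\cdots+a_j\le j-1$, and so that orbits have predictable sizes. The asymmetric boundary conditions at the endpoints — $a_1 = 0$ on the left and $a_n \ge 1$ on the right — add bookkeeping subtlety when aligning the linearizations of cyclic words with labeled Dyck paths, but once the right cyclic interpretation is in hand the identity follows cleanly.
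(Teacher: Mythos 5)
Your reduction of the constant term to a sum over Dyck-type tuples is correct and is essentially the paper's Theorem~\ref{Thm:PS=LD}: the solution of the exponent equations, the inequalities $a_2+\cdots+a_j\le j-1$, the terminal condition $a_2+\cdots+a_n=n-1$, and the weight $\prod_i\binom{k+a_i-1}{a_i}=\prod_i\multiset{k}{a_i}$ all match (one slip: the labels live on the \emph{down}-steps, weakly monotone within each maximal run, not on the up-steps). Up to this point the argument is sound and identifies the constant term with $|\LD_{n-1}(k,0)|$.

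The gap is in the enumeration step, which you leave as a plan and which, as proposed, does not work. You want a cycle lemma acting on a two-letter word of length $(k+1)n-2$ with ``$n$ column markers and $kn-2$ label units''; note first that these letter counts do not match $\binom{(k+1)n-2}{n-1}$ (you would need $n-1$ markers), and more seriously the condition ``labels weakly decreasing within each run, partial sums of run-lengths bounded'' is not a ballot condition on any natural linearization of such a binary word, so rotating it does not permute the admissible configurations in orbits of size $n$. The paper applies the cyclic argument one level earlier: it works with words of length $2n-1$ in the alphabet $\{U,D_1,\dots,D_k\}$ having $n$ letters $U$ and weakly decreasing labels on consecutive $D$'s (the set $\EW_{n-1}(k)$), defines an index via iterated cancellation of $UD_i$ pairs, and obtains an $n$-to-$1$ map onto labeled Dyck paths (Proposition~\ref{prop:n-to-1}). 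This yields $|\LD_{n-1}(k;0,a_1,\dots,a_k)|=\frac1n\prod_{i=1}^k\multiset{n}{a_i}$ for each fixed multiplicity vector, and only \emph{then} does the binomial $\binom{kn+n-2}{n-1}$ appear, via the identity $\sum_{a_1+\cdots+a_k=n-1}\prod_i\multiset{n}{a_i}=\multiset{kn}{n-1}$ (merging $k$ multisets on disjoint alphabets of size $n$ into one multiset on $[kn]$, Corollary~\ref{cor:LD}). In other words, the length-$(kn+n-2)$ binary word is an artifact of a summation performed \emph{after} the cyclic symmetry has been exploited; it is not itself the object to rotate. To complete your proof you need the cycle lemma in the alphabet $\{U,D_1,\dots,D_k\}$ together with this multiset-union identity.
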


\begin{theorem}
\label{Thm:Car}
We have
\begin{multline*}
    \CT_{x_{n}}\dots \CT_{x_{1}}\frac{1}{x_{1}}\prod_{i=1}^{n}(1-x_{i})^{-k}\prod _{i=1}^{n-1}(x_{n}-x_{i})^{-1}\prod _{i=1}^{n-2}(x_{i+1}-x_{i})^{-1}\\
=\frac{1}{k(n+1)+n-2}\binom{kn+k+2n-3}{n}\binom{n+k-2}{k-1}.
\end{multline*}
\end{theorem}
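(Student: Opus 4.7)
The plan is to prove Theorem~\ref{Thm:Car} by a three-step combinatorial argument: expand the constant term as a weighted sum over integer tuples obeying a ballot-type constraint, encode the tuples as labeled Dyck paths, and apply a cycle lemma to produce the product formula.

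First, I would expand each factor of the integrand as a Laurent series,
\[
(1-x_i)^{-k}=\sum_{a_i\ge0}\binom{a_i+k-1}{k-1}x_i^{a_i},\quad
(x_n-x_i)^{-1}=\sum_{b_i\ge0}x_i^{b_i}x_n^{-1-b_i},\quad
(x_{i+1}-x_i)^{-1}=\sum_{c_i\ge0}x_i^{c_i}x_{i+1}^{-1-c_i},
\]
and impose that the exponent of each $x_j$ vanishes, as required by the constant term operator. Collecting these conditions yields the equations $a_j+b_j+c_j-c_{j-1}=1$ for $1\le j\le n-1$ (with boundary conventions $c_0=c_{n-1}=0$) together with $a_n=(n-1)+\sum_{i=1}^{n-1}b_i$. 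Solving $c_j=j-\sum_{i=1}^{j}(a_i+b_i)$ and imposing $c_j\ge0$ turns the constant term into
\[
\sum\prod_{i=1}^{n}\binom{a_i+k-1}{k-1},
\]
summed over $(a_1,\dots,a_n,b_1,\dots,b_{n-1})\in\ZZ_{\ge0}^{2n-1}$ with $\sum_{i=1}^{j}(a_i+b_i)\le j$ for $1\le j\le n-2$, $\sum_{i=1}^{n-1}(a_i+b_i)=n-1$, and $a_n=(n-1)+\sum_{i=1}^{n-1}b_i$.

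Next, I would encode each such tuple as a labeled Dyck path. The sequence $d_i:=a_i+b_i$ defines a Dyck path of semilength $n-1$ via the ballot condition; each step is decorated by a split $d_i=a_i+b_i$ into two colors together with a size-$a_i$ multiset of labels from $[k]$, which accounts for the factor $\binom{a_i+k-1}{k-1}$. The distinguished vertex $n$ carries an additional size-$a_n$ multiset of labels encoded by $\binom{a_n+k-1}{k-1}$, where $a_n=(n-1)+\sum b_i$ is determined by the second-color totals. This is the model to be developed in Section~\ref{sec:labeled-dyck-paths}.

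Finally, I would apply the cycle lemma of Section~\ref{sec:cyclic-lemma}. Setting $M=k(n+1)+n-2$, the right-hand side factors as $\frac{1}{M}\binom{M+n-1}{n}\binom{n+k-2}{k-1}$: the first binomial counts an unconstrained length-$M$ ``backbone'' with $n$ marked positions, and the second counts the extra label-multiset attached to vertex $n$. The cycle lemma identifies valid labeled Dyck paths with the unique ballot representatives of their cyclic orbits of size $M$ under a suitable rotation, so the constrained enumeration equals $M^{-1}$ times the unrestricted one, yielding precisely the claimed formula. The principal obstacle is designing the labeled Dyck path model so that (i) its generating function matches the expanded constant term, respecting the coupling $a_n=(n-1)+\sum b_i$, and (ii) the cyclic rotation acts freely of order $M$ while preserving the labels; the asymmetric role of vertex $n$ makes step (ii) delicate, and this is where the bulk of the combinatorial work resides.
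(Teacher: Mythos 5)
Your first two steps are sound and in fact reproduce the paper's own reduction (Theorem~\ref{Thm:Car=DLD}): the exponent equations you derive are correct, the ballot condition $\sum_{i\le j}(a_i+b_i)\le j$ for $j\le n-2$ together with $\sum_{i\le n-1}(a_i+b_i)=n-1$ is right, and the weighted tuples you describe are exactly the doubly $k$-labeled Dyck paths $\DLD_{n-1}(k)$ of Section~\ref{sec:labeled-dyck-paths}.

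The gap is in your final step. You assert that a cycle lemma of order $M=k(n+1)+n-2$ acts freely on (a relaxation of) these objects, so that the count is $\frac1M\binom{M+n-1}{n}\binom{n+k-2}{k-1}$; but you neither construct this action nor explain how the factor $\binom{n+k-2}{k-1}=\multiset{k}{n-1}$ could be ``the extra label-multiset attached to vertex $n$,'' since in your own model that multiset has size $a_n=(n-1)+\sum_i b_i$, which varies with the path. The natural cyclic symmetry of these objects has order $n$, not $M$: one rotates an extended word with $n$ up-steps as in Lemma~\ref{lem:cyclic}, and this yields only $|\LD_{n-1}(k,d)|=\frac1n\multiset{n}{d}\multiset{kn}{n-1-d}$ for the paths with $d$ zero-labeled down-steps. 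To finish one must still evaluate
\[
\sum_{d=0}^{n-1}\frac1n\Multiset{kn}{n-1-d}\Multiset{n}{d}\Multiset{k}{d+n-1},
\]
which the paper does by recognizing a terminating ${}_2F_1$ at $1$ and applying the Chu--Vandermonde summation (Corollary~\ref{cor:DLD}); the denominator $k(n+1)+n-2$ appears only after this summation and has no visible orbit-counting interpretation in the model. Your plan replaces this genuinely needed algebraic step with an unconstructed rotation of order $M$, and, as you yourself concede, that is ``where the bulk of the combinatorial work resides''; as written the proof is incomplete at precisely that point.
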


\section{Labeled Dyck Paths}
\label{sec:labeled-dyck-paths}

In this section we give combinatorial meanings to the constant terms in Theorems~\ref{Thm:PS} and
\ref{Thm:Car} using labeled Dyck paths.

 A \emph{Dyck path of length $2n$} is a lattice path from $(0,0)$ to $(2n,0)$ consisting of
 \emph{up-steps} $(1,1)$ and \emph{down-steps} $(1,-1)$ lying on or above the line $y=0$.  The set
 of Dyck paths of length $2n$ is denoted by $\Dyck_n$.

 Let $k$ be a positive integer. A \emph{$k$-labeled Dyck path} is a Dyck path with a labeling on the
 down-steps such that the label of each down-step is an integer $0\le i\le k$ and the labels of any
 consecutive down-steps are in weakly decreasing order, see Figure~\ref{fig:LD}.  A \emph{doubly
   $k$-labeled Dyck path} is a $k$-labeled Dyck path together with an additional labeling on the
 down-steps labeled $0$ and the up-steps with integers from $\{1,2,\dots,k\}$ such that the additional labels on these
 steps are weakly increasing, see Figure~\ref{fig:DLD}.  

 We denote by $\LD_n(k)$ (resp.~$\DLD_n(k)$) the set of $k$-labeled Dyck paths (resp.~doubly
 $k$-labeled Dyck paths) of length $2n$. We also denote by $\LD_n(k,d)$ the set of $k$-labeled Dyck
 paths of length $2n$ with exactly $d$ down-steps labeled $0$.

 \begin{figure}
   \centering
\begin{tikzpicture}[scale=0.6]
  \draw[black!20] (0,0) grid (20,6);
  \draw[line width=1.5pt] (0,0)--
  ++(1,1)--++(1,-1)--++(1,1)--++(1,1)--  ++(1,1)--  ++(1,1)--  ++(1,-1)-- ++(1,1)--  ++(1,-1)--  ++(1,-1)--  ++(1,1)--  ++(1,1)--  ++(1,1)-- ++(1,-1)--  ++(1,-1)--  ++(1,-1)--  ++(1,-1)--  ++(1,1)--  ++(1,-1)--  ++(1,-1);
  \draw (0,0)--(20,0);
  \node[above right] at (1,0) {0};
  \node[above right] at (6,3) {5};
  \node[above right] at (8,3) {3};
  \node[above right] at (9,2) {0};
  \node[above right] at (13,4) {4};
  \node[above right] at (14,3) {2};
  \node[above right] at (15,2) {0};
  \node[above right] at (16,1) {0};
  \node[above right] at (18,1) {1};
  \node[above right] at (19,0) {1};
  \node[below] at (0,0) {$(0,0)$};
  \node[below] at (20,0) {$(20,0)$};
\end{tikzpicture}
\caption{A $5$-labeled Dyck path of length $20$. The labels of every consecutive down-steps must be in weakly
decreasing order.}
\label{fig:LD}
 \end{figure}
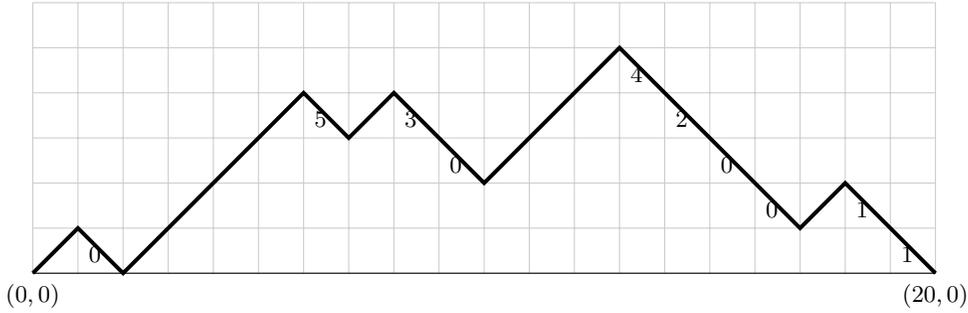

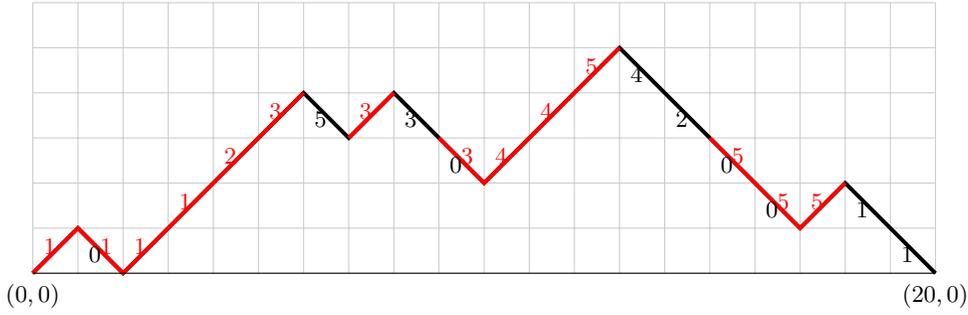
\begin{figure}
\begin{tikzpicture}[scale=0.6]
  \draw[black!20] (0,0) grid (20,6);
  \draw[line width=1.5pt] (0,0)--
  ++(1,1)--++(1,-1)--++(1,1)--++(1,1)--  ++(1,1)--  ++(1,1)--  ++(1,-1)-- ++(1,1)--  ++(1,-1)--  ++(1,-1)--  ++(1,1)--  ++(1,1)--  ++(1,1)-- ++(1,-1)--  ++(1,-1)--  ++(1,-1)--  ++(1,-1)--  ++(1,1)--  ++(1,-1)--  ++(1,-1);
  \draw[line width=1.5pt, red] (0,0)--(1,1)--(2,0)--(6,4) (7,3)--(8,4) (9,3)--(10,2)--(13,5) (15,3)--(17,1)--(18,2);
  \draw (0,0)--(20,0);
  \node[above right] at (1,0) {0};
  \node[above right] at (6,3) {5};
  \node[above right] at (8,3) {3};
  \node[above right] at (9,2) {0};
  \node[above right] at (13,4) {4};
  \node[above right] at (14,3) {2};
  \node[above right] at (15,2) {0};
  \node[above right] at (16,1) {0};
  \node[above right] at (18,1) {1};
  \node[above right] at (19,0) {1};
  \node[below right,red] at (0,1) {1};
  \node[below left,red] at (2,1) {1};
  \node[below right,red] at (2,1) {1};
  \node[below right,red] at (3,2) {1};
  \node[below right,red] at (4,3) {2};
  \node[below right,red] at (5,4) {3};
  \node[below right,red] at (7,4) {3};
  \node[below left,red] at (10,3) {3};
  \node[below right,red] at (10,3) {4};
  \node[below right,red] at (11,4) {4};
  \node[below right,red] at (12,5) {5};
  \node[below left,red] at (16,3) {5};
  \node[below left,red] at (17,2) {5};
  \node[below right,red] at (17,2) {5};
  \node[below] at (0,0) {$(0,0)$};
  \node[below] at (20,0) {$(20,0)$};
\end{tikzpicture}
\caption{A doubly $5$-labeled Dyck path of length $20$. 
The down-steps labeled $0$ and the up-steps are the red steps and their additional labels are
written in red. The red labels must be in weakly increasing order.}
\label{fig:DLD}
\end{figure}

A \emph{multiset} is a set with repetitions allowed.  Let $\multiset{n}{m}:=\binom{n+m-1}{m}$. Then
$\multiset{n}{m}$ is the number of multisets with $m$ elements taken from $[n]$. Equivalently,
$\multiset{n}{m}$ is the number of nonnegative integer solutions $(a_{1},a_{2},\dots ,a_{n})$ to
$a_{1}+a_{2}+\dots +a_{n}=m$ and also the number of $m$-tuples $(i_{1},i_{2},\dots ,i_{m})$ of nonnegative integers satisfying $1\leq i_{1} \leq i_{2} \leq \dots \leq i_{m} \leq n$.

The following proposition is immediate from the definitions of $\DLD_n(k)$ and $\LD_n(k,d)$.

\begin{proposition}
We have
\label{prop:DLD&LD}
\begin{align*}
    |\DLD_n(k)|=\sum_{d=0}^{n}|\LD_n(k,d)| \Multiset{k}{n+d}.
\end{align*}
\end{proposition}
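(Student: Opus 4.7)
The claim is essentially a bookkeeping identity that follows directly from the definition of $\DLD_n(k)$, so the plan is to argue it by partitioning on the number of down-steps labeled $0$ and counting the additional red labeling.

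First, I would partition $\DLD_n(k)$ according to the number $d$ of down-steps carrying the primary label $0$. Forgetting the additional (red) labeling, the underlying object is a $k$-labeled Dyck path with exactly $d$ down-steps labeled $0$, so it lies in $\LD_n(k,d)$. Conversely, every element of $\DLD_n(k)$ is obtained by starting from some $P\in\LD_n(k,d)$ for a unique $d\in\{0,1,\dots,n\}$ and then adjoining the red labels. Hence
\[
|\DLD_n(k)|=\sum_{d=0}^{n}|\LD_n(k,d)|\cdot N(n,d,k),
\]
where $N(n,d,k)$ is the number of valid red-labelings of a fixed path $P\in\LD_n(k,d)$.

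Next I would count $N(n,d,k)$. The red labels are attached to exactly $n+d$ steps of $P$, namely the $n$ up-steps and the $d$ down-steps with primary label $0$, and by definition they must be weakly increasing from $\{1,2,\dots,k\}$ as one traverses $P$ from left to right. Reading these labels in order along $P$ therefore produces an arbitrary weakly increasing sequence $1\le i_1\le i_2\le\cdots\le i_{n+d}\le k$, and every such sequence arises exactly once. The number of such sequences is the multiset coefficient $\multiset{k}{n+d}$, so $N(n,d,k)=\multiset{k}{n+d}$. Substituting this into the displayed sum yields the claimed identity.

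The argument is essentially immediate from the definitions; the only point that requires a moment's care is to confirm that the relative order of the red labels along the path is unconstrained by anything other than weak monotonicity, so that the red-labeling is genuinely in bijection with a multiset of size $n+d$ from $[k]$, independently of the shape of $P$ or of where the $0$-labeled down-steps occur. No main obstacle is anticipated beyond this verification.
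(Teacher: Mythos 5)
Your argument is correct and is exactly the reasoning the paper has in mind: the paper gives no written proof, stating only that the proposition is immediate from the definitions, and your partition by the number $d$ of $0$-labeled down-steps together with the count of weakly increasing sequences of length $n+d$ from $[k]$ is precisely that immediate argument.
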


We now show that the constant terms in Theorems~\ref{Thm:PS} and \ref{Thm:Car} have the following combinatorial
interpretations.

\begin{theorem}
We have
\label{Thm:PS=LD}
\begin{align*}
    \CT_{x_{n}}\dots \CT_{x_{1}}\prod_{i=1}^{n}(1-x_{i})^{-k}\prod _{i=1}^{n-1}(x_{i+1}-x_{i})^{-1} = |\LD_{n-1}(k,0)|.
\end{align*}
\end{theorem}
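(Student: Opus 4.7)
The strategy is to expand the integrand as a product of geometric series, read off the constant coefficient, and match the resulting sum with a weighted enumeration of Dyck paths of length $2(n-1)$.

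First, I would use the expansions
\[
(1-x_i)^{-k} = \sum_{a_i\ge 0}\multiset{k}{a_i} x_i^{a_i}, \qquad (x_{i+1}-x_i)^{-1} = \sum_{j_i\ge 0} x_i^{j_i}x_{i+1}^{-j_i-1},
\]
so that the product becomes a sum over nonnegative tuples $(a_1,\dots,a_n,j_1,\dots,j_{n-1})$ with coefficient $\prod_{i=1}^n\multiset{k}{a_i}$. The exponent of $x_1$ is $a_1+j_1$, of $x_i$ for $2\le i\le n-1$ is $a_i+j_i-j_{i-1}-1$, and of $x_n$ is $a_n-j_{n-1}-1$. Setting each exponent to zero forces $a_1=j_1=0$, then by induction $j_i = (i-1) - \sum_{\ell=2}^i a_\ell$ for $1\le i\le n-1$, and finally $a_2+\cdots+a_n = n-1$. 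The nonnegativity $j_i\ge 0$ translates into the partial-sum bound $\sum_{\ell=2}^{i} a_\ell \le i-1$ for each $1\le i\le n-1$.

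Consequently, the left-hand side equals
\[
\sum_{(a_2,\dots,a_n)} \prod_{i=2}^{n}\multiset{k}{a_i},
\]
summed over nonnegative tuples satisfying $\sum_{\ell=2}^{n}a_\ell = n-1$ and $\sum_{\ell=2}^{i}a_\ell \le i-1$ for $2\le i\le n-1$. To match this against $|\LD_{n-1}(k,0)|$, I would associate to each such tuple the Dyck path $P(a_2,\dots,a_n) := UD^{a_2}UD^{a_3}\cdots UD^{a_n}$ of length $2(n-1)$: the partial-sum conditions are precisely the Dyck condition that $P$ remains on or above the $x$-axis and returns to it. Within each block $D^{a_i}$, the consecutive down-steps must have weakly decreasing labels in $\{1,\dots,k\}$ (no $0$'s because we are counting $\LD_{n-1}(k,0)$), and weakly decreasing sequences of length $a_i$ in $[k]$ are counted by $\multiset{k}{a_i}$. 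Labels in different blocks are independent because those blocks are separated by up-steps. Summing the products over all admissible compositions $(a_2,\dots,a_n)$ therefore enumerates exactly the $k$-labeled Dyck paths of length $2(n-1)$ with no down-step labeled $0$, which is $|\LD_{n-1}(k,0)|$.

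The main obstacle is purely bookkeeping: tracking exponents carefully through the recursion so that the nonnegativity of the auxiliary indices $j_i$ becomes exactly the Dyck condition on partial sums. Once this translation is in place, the bijection between compositions $(a_2,\dots,a_n)$ and the $U$-separated runs of $D$-steps, together with the interpretation of $\multiset{k}{a_i}$ as the number of valid label assignments on a single run, makes the identification automatic.
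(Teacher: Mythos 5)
Your argument is correct and is essentially the paper's own proof: both expand each factor as a (geometric/binomial) series, observe that the constant-term condition forces the first block of exponents to vanish and turns the nonnegativity of the auxiliary indices into the partial-sum (Dyck) inequalities, and then identify $\prod_i \multiset{k}{a_i}$ with the count of weakly decreasing labelings from $[k]$ on the runs of down-steps. The only cosmetic difference is that the paper writes $(1-x_i)^{-k}$ as a product of $k$ geometric series and counts solutions of $a_{i1}+\dots+a_{ik}=r_i$, whereas you invoke the multiset coefficient directly.
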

\begin{proof}
Consider that we choose $x_i^{a_{i1}}x_i^{a_{i2}}\cdots x_i^{a_{ik}}$ in $(1-x_{i})^{-k}=(1+x_{i}+x_{i}^{2}+\cdots)\cdots (1+x_{i}+x_{i}^{2}+\cdots)$ for $i=1,2,\dots ,n$ and we choose $x_{i}^{b_{i}}/x_{i+1}^{b_{i}+1}$ in $(x_{i+1}-x_{i})^{-1}=1/x_{i+1}+x_{i}/x_{i+1}^{2}+x_{i}^{2}/x_{i+1}^{3}+\cdots$ for $i=1,2,\dots,n-1$. Then $\prod_{i=1}^{n}(1-x_{i})^{-k}\prod _{i=1}^{n-1}(x_{i+1}-x_{i})^{-1}=\sum\prod_{i=1}^{n}(x_{i}^{a_{i1}+\dots+ a_{ik}+b_{i}-b_{i-1}-1})$, where the sum is over all nonnegative integers $a_{ij},b_{i}$ for $1\leq i\leq n$ and $1\leq j \leq k$ with $b_{0}=-1$ and $b_{n}=0$. Hence the left-hand side is the number of the nonnegative integer solutions to the equations $a_{i1}+\dots +a_{ik}=b_{i-1}-b_{i}+1$ for $i=1,2,\dots,n$ with $b_{0}=-1,b_{n}=0$. If we set $r_{i}=b_{i-1}-b_{i}+1$, so that $r_1+r_2+\dots +r_j=j+b_0-b_j\leq j-1$, then the number of solutions is $\sum\prod_{i=1}^{n}\multiset{k}{r_{i}}$, where the sum is over all nonnegative integers $r_{1},\dots,r_{n}$ with $r_{1}+\dots +r_{j}\leq j-1$ for $j=1,2,\dots, n-1$ and $r_{1}+\dots +r_{n}=n-1$. For such an $n$-tuple $(r_1,\dots,r_n)$, let $D$ be the Dyck path of length $2(n-1)$ such that the number of consecutive down-steps after the $i$th up-step is $r_{i+1}$ for $i=1,\dots,n-1$. The map $(r_1,\dots,r_n) \mapsto D$ is a bijection from the set of $n$-tuples satisfying the above conditions to $\Dyck_{n-1}$. Under this correspondence, $\prod_{i=1}^{n}\multiset{k}{r_i}$ is the number of $k$-labeled Dyck paths in $\LD_{n-1}(k,0)$ whose underlying Dyck path is $D$. Therefore we obtain the result.
\end{proof}

\begin{theorem}
We have
\label{Thm:Car=DLD}
\begin{align*}
     \CT_{x_{n}}\dots \CT_{x_{1}}\frac{1}{x_{1}}\prod_{i=1}^{n}(1-x_{i})^{-k}\prod _{i=1}^{n-1}(x_{n}-x_{i})^{-1}\prod _{i=1}^{n-2}(x_{i+1}-x_{i})^{-1} = |\DLD_{n-1}(k)|.
\end{align*}
\end{theorem}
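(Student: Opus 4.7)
The plan is to mimic the proof of Theorem~\ref{Thm:PS=LD}, expanding every factor on the left-hand side as a Laurent series, collecting the coefficient of $x_1^0\cdots x_n^0$, and then matching the resulting weighted count to $|\DLD_{n-1}(k)|$ via Proposition~\ref{prop:DLD&LD}. Three kinds of factors appear: the familiar $(1-x_i)^{-k}=\sum_{a_i\ge 0}\multiset{k}{a_i}x_i^{a_i}$ for $1\le i\le n$; the chain factors $(x_{i+1}-x_i)^{-1}=\sum_{b_i\ge 0}x_i^{b_i}x_{i+1}^{-b_i-1}$ for $1\le i\le n-2$; and the new fan factors $(x_n-x_i)^{-1}=\sum_{c_i\ge 0}x_i^{c_i}x_n^{-c_i-1}$ for $1\le i\le n-1$.

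Next I will equate the exponent of each variable $x_j$ in the resulting product (including the $1/x_1$ correction) to zero, yielding
\[
a_1+b_1+c_1=1,\quad a_i+b_i+c_i=b_{i-1}+1\ (2\le i\le n-2),\quad a_{n-1}+c_{n-1}=b_{n-2}+1,
\]
together with $a_n=(n-1)+\sum_{i=1}^{n-1}c_i$, all variables being nonnegative integers. Summing the first $j-1$ equations above (and taking $b_0=0$) gives $b_{j-1}=(j-1)-\sum_{i=1}^{j-1}(a_i+c_i)$, so the $b_i$'s are fully determined by the $(a_i,c_i)$, and the nonnegativity $b_{j-1}\ge 0$ becomes $\sum_{i=1}^{j-1}(a_i+c_i)\le j-1$. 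Writing $r_i:=a_i+c_i$, these inequalities together with $\sum_{i=1}^{n-1}r_i=n-1$ are exactly the conditions for $(r_1,\dots,r_{n-1})$ to record the numbers of down-steps following the successive up-steps of some Dyck path $D\in\Dyck_{n-1}$.

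The constant term then rewrites as
\[
\sum_{D\in\Dyck_{n-1}}\;\sum_{\substack{a_i,c_i\ge0\\ a_i+c_i=r_i}}\left(\prod_{i=1}^{n-1}\multiset{k}{a_i}\right)\Multiset{k}{(n-1)+d},
\]
where $d:=\sum_{i=1}^{n-1}c_i$ and $(r_i)$ is the shape of $D$. I will then interpret each summand combinatorially: among the $r_i$ down-steps in the $i$-th descent run of $D$, a choice of $c_i$ of them to be labeled $0$ together with a weakly decreasing labeling of the remaining $a_i=r_i-c_i$ from $\{1,\dots,k\}$ (counted by $\multiset{k}{a_i}$) produces a valid weakly decreasing labeling of that run from $\{0,1,\dots,k\}$; meanwhile the $n-1$ up-steps together with the $d$ zero-labeled down-steps carry a weakly increasing sequence of additional labels from $[k]$, counted by $\Multiset{k}{(n-1)+d}$ (which is exactly $\multiset{k}{a_n}$ forced by the $x_n$-constraint). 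Thus the inner sum over $(c_i)$ with $\sum c_i=d$ is $|\LD_{n-1}(k,d)|$, and grouping by $d$ and applying Proposition~\ref{prop:DLD&LD} gives $|\DLD_{n-1}(k)|$.

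The main subtlety will be the bookkeeping required to pass from the three-family system of linear constraints on $(a_i,b_i,c_i)$ to the single shape variable $(r_i)$, and to verify that the length $a_n=(n-1)+d$ of the final multiset factor matches exactly the number of steps receiving \emph{additional} labels in the doubly $k$-labeled Dyck path (the $n-1$ up-steps together with the $d$ zero-labeled down-steps), so that the weights produced by the constant-term expansion align cleanly with the combinatorial statistics of $\DLD_{n-1}(k)$.
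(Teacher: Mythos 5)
Your proposal is correct and follows essentially the same route as the paper's proof: the same Laurent expansion of the three families of factors, the same reduction to the shape variables $r_i=a_i+c_i$ giving a Dyck path of length $2(n-1)$, and the same interpretation of $\multiset{k}{r_i-c_i}$ as a weakly decreasing labeling of a descent run with exactly $c_i$ trailing zeros and of $\multiset{k}{(n-1)+d}$ as the additional weakly increasing labels. The only (harmless) differences are cosmetic: you expand $(1-x_i)^{-k}$ directly via multiset coefficients rather than as $k$ geometric series, and you route the final step through Proposition~\ref{prop:DLD&LD} where the paper assembles the doubly labeled paths fiberwise over each $D$.
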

\begin{proof}
Similarly to the previous theorem, considering $x_i^{a_{i1}}x_i^{a_{i2}}\cdots x_i^{a_{ik}}$ in
$(1-x_{i})^{-k}$ and $x_{i}^{b_{i}}/x_{i+1}^{b_{i}+1}$ in $(x_{i+1}-x_i)^{-1}$ and
$x_{i}^{c_{i}}/x_{n}^{c_{i}+1}$ in $(x_n-x_i)^{-1}$, we get that the left-hand side is the number of
the nonnegative integer solutions to the equations $a_{i1}+\dots +a_{ik}+b_{i}+c_{i}=1+b_{i-1}$ for
$i=1,2,\dots,n-1$ and $a_{n1}+\dots+ a_{nk}+b_{n}+c_{n}=n-1+c_{1}+\dots +c_{n-1}$ with
$b_{0}=b_{n-1}=b_{n}=c_{n}=0$. If we set $r_{i}=b_{i-1}-b_{i}+1$ for $i=1,2,\dots,n-1$, then the
number of solutions is
$\sum_{r_i,c_i}\prod_{i=1}^{n-1}\multiset{k}{r_{i}-c_i}\multiset{k}{n-1+c_{1}+\dots +c_{n-1}}$
 where the sum is over all nonnegative integers $r_i,c_i$ for $i=1,2,\dots, n-1$ with $r_{1}+\dots +r_{j}\leq j$ for $j=1,2,\dots, n-2$ and $r_{1}+\dots +r_{n-1}=n-1$. For such an $n$-tuple $(r_1,\dots,r_n)$, let $D$ be the Dyck path of length $2(n-1)$ such that the number of consecutive down-steps after the $i$th up-step is $r_{i}$ for $i=1,\dots,n-1$. The map $(r_1,\dots,r_n) \mapsto D$ is a bijection from the set of $n$-tuples satisfying the above conditions to $\Dyck_{n-1}$. Regard $\multiset{k}{r-c}$ as the number of $r$-tuples $(i_1,\dots,i_r)$ of integers with $k\geq i_1 \geq \dots \geq i_{r-c} \geq 1$ and $ i_{r-c+1} =\dots=i_r= 0$. Then $\sum_{c_i}\prod_{i=1}^{n-1}\multiset{k}{r_{i}-c_i}\multiset{k}{n-1+c_{1}+\dots +c_{n-1}}$, where the sum is over all nonnegative integers $c_i$ for $i=1,2,\dots,n-1$ with $c_i \leq r_i$, is the number of doubly $k$-labeled Dyck paths whose underlying Dyck path is $D$. Therefore we obtain the result.
\end{proof}

Note that by Proposition~\ref{prop:DLD&LD}, we can compute the constant terms in
Theorems~\ref{Thm:PS=LD} and \ref{Thm:Car=DLD} if we have a formula for the cardinality $|\LD_n(k,d)|$.
Therefore our next step is to find this number.

\section{A cyclic lemma}
\label{sec:cyclic-lemma}

Let $\LD_{n}(k;a_0,a_1,\dots,a_k)$ denote the set of $k$-labeled Dyck paths of length $2n$ 
such that the number of down-steps with label $i$ is $a_i$ for $0\le i\le k$. 
In this section we prove the following theorem using a cyclic lemma. 

\begin{theorem}
\label{thm:dyck}
We have
\[
|\LD_n(k;a_0,a_1,\dots,a_k)|=\frac{1}{n+1}\prod_{i=0}^{k}\Multiset{n+1}{a_i}.
\]
\end{theorem}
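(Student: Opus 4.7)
The plan is to prove the identity bijectively with a cycle-lemma argument that establishes $(n+1)\,|\LD_n(k;a_0,\dots,a_k)| = \prod_{i=0}^{k}\multiset{n+1}{a_i}$. The key observation is that the product on the right is precisely the number of nonnegative integer tuples $(c_{j,i})_{1\le j\le n+1,\,0\le i\le k}$ satisfying $\sum_{j=1}^{n+1}c_{j,i}=a_i$ for each $i$; let $\mathcal{B}$ denote this set. It then suffices to biject $\mathcal{B}$ with $\LD_n(k;a_0,\dots,a_k)\times\{1,2,\dots,n+1\}$.

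For the forward map, given $(c_{j,i})\in\mathcal{B}$, I form the length-$(2n+1)$ word
\[
\sigma \;=\; U\,T_1\,U\,T_2\,U\,\cdots\,U\,T_{n+1},
\]
where $T_j$ is the weakly decreasing arrangement of down-steps containing exactly $c_{j,i}$ down-steps labeled $i$ for each $i$. The underlying $\pm 1$-sequence has $n+1$ up-steps and $n$ down-steps, summing to $1$, so the Dvoretzky--Motzkin cycle lemma guarantees that exactly one of its $2n+1$ cyclic rotations has every partial sum strictly positive. Any rotation starting at a down-step fails immediately, so the unique good rotation begins at some up-step, say the $m$-th of $\sigma$; crucially, such a rotation never splits a run $T_j$, so the rotated word has the form $U\,T_m\,U\,T_{m+1}\,\cdots\,U\,T_{m-1}$ (indices mod $n+1$). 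Removing its leading $U$ yields a labeled Dyck path $P\in\LD_n(k;a_0,\dots,a_k)$, and I define the image of the tuple to be $(P,m)$.

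For the inverse, given $(P,j)\in\LD_n(k;a_0,\dots,a_k)\times\{1,\dots,n+1\}$, I prepend a $U$ to $P$ to form the word $UP$ of length $2n+1$, label its $n+1$ up-steps in order as $U_1,\dots,U_{n+1}$ (with $U_1$ the prepended step), cyclically rotate so $U_j$ becomes the first letter, and read off the $n+1$ descending runs to produce a tuple in $\mathcal{B}$. The key check is that when the forward map is applied to this tuple, the cycle-lemma's good rotation is precisely the one that restores $UP$: since $P$ is a Dyck path, every partial sum of $UP$ is at least $1$, so $UP$ already satisfies the strict-positivity condition and uniqueness in the cycle lemma pins it down. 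Tracking the correspondence between $j$ and $m$ confirms that the forward and inverse maps are mutual bijections.

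The bijection yields $|\mathcal{B}|=(n+1)\,|\LD_n(k;a_0,\dots,a_k)|$, which rearranges to the desired formula. I expect the main obstacle to be the careful index bookkeeping that matches $j$ and $m$, together with verifying that the label constraint (weakly decreasing labels in every descending run) is preserved by all up-step-starting rotations; this works cleanly because each $T_j$ is internally weakly decreasing and such rotations never split a run.
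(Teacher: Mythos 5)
Your argument is correct and is essentially the paper's own proof: your set $\mathcal{B}$ of tuples $(c_{j,i})$ is exactly the paper's set $\EW_n(k)$ of extended $k$-labeled words of length $2n+1$ (obtained by inserting $a_i$ copies of $D_i$ after the $n+1$ up-steps in $\multiset{n+1}{a_i}$ ways), and your appeal to the Dvoretzky--Motzkin cycle lemma on the underlying $\pm1$-sequence plays the role of the paper's self-contained index/shift argument establishing the same $(n+1)$-to-$1$ correspondence with $\LD_n(k;a_0,\dots,a_k)$. The only loose end is the $j$-versus-$m$ index bookkeeping you flag, which is routine.
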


\begin{remark}
A \emph{parking function} of length $n$ is a tuple $(p_1,p_2,\dots,p_n)\in \ZZ^n_{>0}$ with a condition that $q_i\leq i$ for $i=1,2,\dots,n$ where $(q_1,q_2,\dots,q_n)$ is the rearrangement of $(p_1,p_2,\dots,p_n)$ in weakly increasing order. Let $PF_n$ be the set of parking function of length $n$.
There is a well-known bijection between $PF_n$ and $n$-labeled Dyck paths of length $2n$ which the number of each label from $1$ to $n$ equals $1$. Thus, using Theorem \ref{thm:dyck}, we have
\[|PF_n|=|\LD_n(n;0,1,1,\dots,1)|=(n+1)^{n-1}.\]
\end{remark}

\begin{remark}
  Recently, Yip \cite[Theorem 3.18]{Yip2019} considered a set
  $\mathcal{T}_k(n,i)$ of certain labeled Dyck paths and found a simple
  formula for its cardinality using a cyclic lemma. Using our notation, this set can be written
  \[
\mathcal{T}_k(n,i) = \bigcup_{a_0+\dots+a_{k-1}=n-i}\Dyck_n(k+i-1;a_0,\dots,a_{k-1},1^i).
  \]
The proof of Theorem~\ref{thm:dyck} in this section is essentially the same as
that in \cite[Theorem 3.18]{Yip2019}.
\end{remark}
 
A \emph{$k$-labeled Dyck word} of length $2n$ is a sequence $w = w_1\dots w_{2n}$ of letters in $\{U,D_0,D_1,\dots,D_k\}$ 
satisfying the following conditions:
\begin{itemize}
\item The number of $U$'s is equal to $n$. 
\item For any prefix $w_1\dots w_j$, the number of $U$'s is greater than or equal to the total number of $D_i$'s for
  $0\le i\le k$.
\item The labels of any consecutive $D_i$'s are in weakly decreasing order, i.e.,
if $w_i=D_a$ and $w_{i+1}=D_b$, then $a\ge b$.% !TEX useTabs% !TEX useTabs
\end{itemize}

Replacing each up step by $U$ and each down step labeled $i$ by $D_i$ is an obvious bijection from $k$-labeled Dyck
paths to $k$-labeled Dyck words. For example, the $k$-labeled Dyck word corresponding to
the $k$-labeled Dyck path in Figure~\ref{fig:LD} is \begin{equation}
  \label{eq:w}
UD_0UUUUD_5UD_3D_0UUUD_4D_2D_0D_0UD_1D_1.  
\end{equation}

From now on, we will identify $k$-labeled Dyck paths with $k$-labeled Dyck words using this bijection. Note that
$\LD_n(k;a_0,a_1,\dots,a_k)$ is then the set of $k$-labeled Dyck words of length $2n$ in which the number of $D_i$'s is
equal to $a_i$ for $0\le i\le k$. We can count such words by using a well-known cyclic argument. We first need another
definition.

An \emph{extended $k$-labeled word} of length $2n+1$ is a sequence $w = w_1\dots w_{2n+1}$ of letters in
$\{U,D_0,D_1,\dots,D_k\}$ with $w_1=U$ and  exactly $n+1$ $U$'s that satisfies the third condition of a $k$-labeled Dyck
word: the labels of any consecutive $D_i$'s are in weakly decreasing order, i.e.,
if $w_i=D_a$ and $w_{i+1}=D_b$, then $a\ge b$.  The set of extended
$k$-labeled words of length $2n+1$ is denoted by $\EW_n(k)$.

Let $w = w_1\dots w_{2n+1}\in \EW_n(k)$. We define the integer $\ind(w)$ using the
following algorithm. Here, $w = w_1\dots w_{2n+1}$ is cyclically ordered, which means that $w_1$ is followed by $w_2$,
$w_2$ is followed by $w_3$, and so on, and $w_{2n+1}$ is followed by $w_1$.
\begin{itemize}
\item Find a letter $U$ followed by a $D_i$ for some $0\le i\le k$ in cyclic order and delete this pair $U$ and $D_i$
  from $w$. Repeat this until there is only one letter left, which must be $U$.
\item If the remaining $U$ is the $j$th $U$ in the original word $w$ then define $\ind(w)=j$.
\end{itemize}
We also define the \emph{shifting operator} $s:\EW_n(k)\to \EW_n(k)$ by
\[
  s(w): = w_{i}w_{i+1}\cdots w_{2n+1} w_1 \dots w_{i-1},
\]
where $i$ is the largest integer with $w_i=U$.

\begin{example}
Let $w=UD_1D_0UUD_1U \in \EW_3(1)$. Then by the algorithm,
\[
\underline{UD_1}D_0UUD_1U \rightarrow D_0U\underline{UD_1}U \rightarrow \underline{D_0}U\underline{U} \rightarrow U,
\]
we get $\ind(w)=2$ since the remaining $U$ is the second $U$ in $w$. 
\end{example}

Since the above algorithm treats the word $w$ cyclically one can easily see that the following lemma holds. 

\begin{lemma}\label{lem:cyclic}
For any element $w\in \EW_n(k)$, we have
  \[
    \ind(s(w)) \equiv \ind(w) + 1 \mod n+1.
  \]
\end{lemma}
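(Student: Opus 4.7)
The key observation is that the deletion algorithm operates on $w$ as a cyclic word and ignores the $D_i$ labels entirely: it only ever asks whether a letter is $U$ or a $D$-letter. Consequently, the surviving $U$, viewed as a specific letter of the cyclic word $w$, depends neither on where we cut $w$ to regard it as linear nor on which admissible $UD_i$ pair we delete at each step. In particular, the surviving $U$ for $w$ and for $s(w)$ is \emph{the same letter} of the cyclic word; what changes between $w$ and $s(w)$ is only the linear numbering of the $U$'s. My plan is first to isolate this cyclic well-definedness as a brief preliminary remark, and then to compute explicitly how the linear index of this fixed letter shifts when we pass from $w$ to $s(w)$.

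For the well-definedness, I would forget the labels on the $D_i$'s and regard $w$ as a cyclic sequence of $n+1$ $U$'s and $n$ $D$'s. The classical cycle lemma then singles out the surviving $U$ as the unique one with the property that cutting $w$ immediately \emph{before} this $U$ produces a linear word whose height function (with $U\mapsto +1$, $D\mapsto -1$) is nonnegative on every prefix and equals $+1$ at the end; this is exactly the condition that all $U$'s and $D$'s except this distinguished $U$ can be matched up into nested adjacent $UD$ pairs in some order, which is what the deletion algorithm accomplishes. The characterization is manifestly invariant under cyclic rotation and choice of deletion order, so the surviving $U$ is a well-defined letter of the cyclic word.

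With this in hand, the computation is routine. Let $p_1 < p_2 < \cdots < p_{n+1}$ be the positions of the $U$'s in $w$, so that $i = p_{n+1}$ is the index used in the definition of $s$. Since $w_{i+1}, \ldots, w_{2n+1}$ contain no $U$, the $U$'s of $s(w) = w_i w_{i+1} \cdots w_{2n+1} w_1 \cdots w_{i-1}$ appear, in linear order, as $w_{p_{n+1}}, w_{p_1}, w_{p_2}, \ldots, w_{p_n}$. If $\ind(w) = j$, then the surviving $U$ is $w_{p_j}$, and by the previous paragraph this same letter is also the surviving $U$ for $s(w)$. But $w_{p_j}$ is the $(j+1)$th $U$ of $s(w)$ when $j \le n$, and the first $U$ of $s(w)$ when $j = n+1$. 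In either case $\ind(s(w)) \equiv j + 1 \equiv \ind(w) + 1 \pmod{n+1}$, as required. The main obstacle is the well-definedness step, but because cycle-lemma arguments of this exact shape are completely standard, the preliminary remark can be kept short.
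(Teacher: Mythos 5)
Your overall strategy is exactly the one the paper has in mind (its entire proof is the sentence ``since the above algorithm treats the word $w$ cyclically one can easily see that the lemma holds''), and both halves of your argument---cyclic well-definedness of the surviving $U$, followed by the bookkeeping of how the linear numbering of the $U$'s changes under $s$---are the right ingredients. The index computation in your final paragraph is correct and complete.

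There is, however, an off-by-one error in your characterization of the surviving $U$. A word in $\EW_n(k)$ has $n+1$ letters $U$ and $n$ letters $D_i$, so every cyclic cut yields a word ending at height $+1$, and more than one cut starting at a $U$ can have all prefix heights nonnegative: for $w=UD_0U$ (so $n=1$) both readings $UD_0U$ and $UUD_0$ have nonnegative prefix heights, yet the deletion algorithm leaves only the second $U$. So the condition you state does not single out a unique letter, and the claimed equivalence with matchability of the remaining letters fails in one direction. The correct form of the cycle lemma here is that there is a \emph{unique} cyclic cut (necessarily at a $U$) for which every nonempty prefix has \emph{strictly} positive height; equivalently, deleting that initial $U$ leaves a word with $n$ $U$'s and $n$ $D$'s and nonnegative prefix heights, which is precisely the condition that those letters can be cancelled in adjacent $UD$ pairs. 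With ``nonnegative'' replaced by ``strictly positive'' your preliminary remark does identify a unique letter, the manifest rotation-invariance goes through, and the rest of your proof stands as written. (Alternatively, you could avoid the height characterization entirely by observing that the deletion process matches each $D$ with the nearest cyclically preceding unmatched $U$, a matching that is independent of the deletion order and of the labels, so the unmatched $U$ is canonically determined by the cyclic word.)
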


Observe that for $w=w_1w_2\dots w_{2n+1}\in \EW_n(k)$ we have $w_1\dots w_{2n}\in\Dyck_n(k)$ if and only if
$\ind(w)=n+1$. Therefore, by Lemma~\ref{lem:cyclic}, for each $w\in \EW_n(k)$ there is a unique integer $0\le j\le n$
such that $s^j(w) = w'U$ for some $k$-labeled Dyck word $w'$ of length $2n$. This defines a map
$p: \EW_n(k)\to\Dyck_n(k)$ sending $w$ to $p(w)=w'$. Again, by Lemma~\ref{lem:cyclic}, this is a $(n+1)$-to-$1$ map.
Note that $w$ and $w'$ have the same the number of steps $D_i$ for each $0\le i\le k$. We have proved the following
proposition. 

\begin{proposition}\label{prop:n-to-1}
There is an $(n+1)$-to-$1$ map $p: \EW_n(k)\to\Dyck_n(k)$ preserving the number of $D_i$'s for $0\le i\le k$.
\end{proposition}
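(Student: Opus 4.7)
The plan is to build the map $p$ explicitly by cyclically shifting each $w \in \EW_n(k)$ into a canonical form, and then to read off the $(n+1)$-to-$1$ property and the preservation of the labels from the structure of $s$-orbits.

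The crucial step is the characterization that an element $v \in \EW_n(k)$ satisfies $\ind(v) = n+1$ if and only if $v = v'U$ for some $v' \in \Dyck_n(k)$. For the ``if'' direction, when $v'$ is a $k$-labeled Dyck word, the cancellation algorithm of Section~\ref{sec:cyclic-lemma} can pair up every letter inside $v'$ linearly, without crossing the cyclic boundary, so the surviving $U$ is the trailing one, which is the $(n+1)$th $U$. For the converse, if either $v_{2n+1} \neq U$ or $v_1 \cdots v_{2n}$ violates the Dyck condition, then at some stage the algorithm is forced to pair a $U$ occurring before the $(n+1)$th $U$ with a $D_i$ across the cyclic boundary, and one checks that this ultimately causes the $(n+1)$th $U$ to be consumed. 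I expect this characterization to be the main obstacle, since one also needs to verify that $\ind(v)$ is well defined, i.e., independent of the order in which one chooses the $UD_i$ pairs to delete.

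Once the characterization is in place, Lemma~\ref{lem:cyclic} yields $\ind(s^j(w)) \equiv \ind(w) + j \pmod{n+1}$, so among $w, s(w), \dots, s^n(w)$ there is a unique shift $s^{j_0}(w)$ with $\ind(s^{j_0}(w)) = n+1$. I define $p(w) := w'$ where $s^{j_0}(w) = w'U$; this is well defined into $\Dyck_n(k)$ by the characterization. Two elements of $\EW_n(k)$ have the same image under $p$ precisely when they lie in the same $s$-orbit, and since the $n+1$ shifts in each orbit have pairwise distinct indices they are pairwise distinct elements of $\EW_n(k)$, so every fiber of $p$ contains exactly $n+1$ elements. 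Preservation of the number of steps $D_i$ for each $0 \le i \le k$ is immediate, as $s$ merely cyclically permutes the letters of $w$ and stripping the trailing $U$ of $s^{j_0}(w)$ does not affect any of the $D_i$ counts.
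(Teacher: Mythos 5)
Your proposal is correct and follows essentially the same route as the paper: the characterization that $\ind(w)=n+1$ exactly when $w=w'U$ with $w'\in\Dyck_n(k)$, combined with Lemma~\ref{lem:cyclic} to locate a unique such shift in each $s$-orbit, is precisely the paper's argument. The details you flag (well-definedness of $\ind$ and the fiber-equals-orbit count) are filled in more explicitly than in the paper, but the underlying mechanism is identical.
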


We now can prove Theorem~\ref{thm:dyck} easily.

\begin{proof}[Proof of Theorem~\ref{thm:dyck}]
By Proposition~\ref{prop:n-to-1}, $(n+1)|\LD_n(k;a_0,a_1,\dots,a_k)|$ is the number of
  elements $w\in \EW_n(k)$ in which $D_i$ appears $a_i$ times for $0\le i\le k$. Since consecutive $D_i$'s are always
  ordered according to their subscripts, such elements $w$ are obtained from the sequence $U\dots U$ of $n+1$ $U$'s by
  inserting $a_i$ $D_i$'s after $U$'s in $\multiset{n+1}{a_i}$ ways for $0\le i\le k$ independently. Thus we have
  \[
    (n+1)|\LD_n(k;a_0,a_1,\dots,a_k)|=\prod_{i=0}^{k}\Multiset{n+1}{a_i},
\]
which completes the proof.
\end{proof}

As corollaries we obtain formulas for $|\LD_{n}(k,d)|$ and $|\DLD_{n-1}(k)|$.

\begin{corollary}
\label{cor:LD}
We have
\[
   |\LD_{n}(k,d)|=\frac{1}{n+1}\Multiset{n+1}{d} \Multiset{k(n+1)}{n-d}.
\]
\end{corollary}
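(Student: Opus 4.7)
The plan is to obtain Corollary~\ref{cor:LD} as a direct summation of Theorem~\ref{thm:dyck} over the possible label distributions on the nonzero labels. By definition, the set $\LD_n(k,d)$ is the disjoint union of $\LD_n(k;d,a_1,\dots,a_k)$ as $(a_1,\dots,a_k)$ ranges over nonnegative integer tuples with $a_1+\cdots+a_k = n-d$, since fixing the number of down-steps labeled $0$ to be $d$ forces the remaining $n-d$ down-steps to carry labels in $\{1,\dots,k\}$ with some multiplicities $a_1,\dots,a_k$.

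Applying Theorem~\ref{thm:dyck} to each block, I factor out the terms independent of the $a_i$ with $i\ge 1$:
\[
|\LD_n(k,d)| = \frac{1}{n+1}\Multiset{n+1}{d}\sum_{\substack{a_1,\dots,a_k\ge 0\\ a_1+\cdots+a_k=n-d}} \prod_{i=1}^{k}\Multiset{n+1}{a_i}.
\]
Thus the entire argument reduces to evaluating the inner sum.

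For that inner sum I would use the standard generating function identity $\sum_{a\ge 0}\multiset{n+1}{a}x^a = (1-x)^{-(n+1)}$. Taking the $k$-fold product gives $(1-x)^{-k(n+1)}$, whose coefficient of $x^{n-d}$ is $\multiset{k(n+1)}{n-d}$. Therefore
\[
\sum_{\substack{a_1,\dots,a_k\ge 0\\ a_1+\cdots+a_k=n-d}} \prod_{i=1}^{k}\Multiset{n+1}{a_i} = \Multiset{k(n+1)}{n-d},
\]
and substituting this back yields the claimed formula. There is no real obstacle here; the only step that requires attention is making sure the range of summation is correctly identified (in particular that $a_0 = d$ is fixed while the other $a_i$ freely sum to $n-d$), after which the convolution identity finishes the proof.
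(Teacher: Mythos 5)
Your proof is correct and follows essentially the same route as the paper: decompose $\LD_n(k,d)$ by the multiplicities $(a_1,\dots,a_k)$ of the nonzero labels, apply Theorem~\ref{thm:dyck}, and evaluate the resulting convolution $\sum_{a_1+\cdots+a_k=n-d}\prod_i\multiset{n+1}{a_i}=\multiset{k(n+1)}{n-d}$. The only (cosmetic) difference is that you justify this last identity via the generating function $(1-x)^{-(n+1)}$, whereas the paper gives a direct bijection with multisets from $[k(n+1)]$; both are standard and equally valid.
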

\begin{proof}
  By Theorem~\ref{thm:dyck},
  \[
    |\LD_{n}(k,d)|=\frac{1}{n+1} \Multiset{n+1}{d} \sum_{a_1+\dots+a_k=n-d}\prod_{i=1}^{k}\Multiset{n+1}{a_i}.
  \]
  The above sum is equal to the number of $k$-tuples $(A_1,\dots,A_k)$ of multisets such that
  each element $x\in A_i$ satisfies $(n+1)(i-1)+1\le x\le  (n+1)i$ and $\sum_{i=1}^k |A_i| = n-d$. Since such a
  $k$-tuple is completely determined by $A:=A_1\cup \cdots\cup A_k$, the sum is
  equal to $\multiset{k(n+1)}{n-d}$, the number of multisets of size $n-d$ whose elements are in $[k(n+1)]$. Thus we
  obtain the formula.
\end{proof}

\begin{corollary}
\label{cor:DLD}
We have
\begin{align*}
   |\DLD_{n-1}(k)|=\frac{1}{k(n+1)+n-2}\binom{kn+k+2n-3}{n}\binom{n+k-2}{k-1}.
\end{align*}
\end{corollary}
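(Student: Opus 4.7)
The plan is to combine Proposition~\ref{prop:DLD&LD} (applied with $n$ replaced by $n-1$) with the closed form $|\LD_{n-1}(k,d)| = \frac{1}{n}\multiset{n}{d}\multiset{kn}{n-1-d}$ from Corollary~\ref{cor:LD}. Substituting reduces the statement to evaluating the single binomial sum
\[
|\DLD_{n-1}(k)| = \frac{1}{n}\sum_{d=0}^{n-1}\Multiset{n}{d}\Multiset{kn}{n-1-d}\Multiset{k}{n-1+d}.
\]

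To close this sum, I would expand each multiset coefficient as a binomial and compute the ratio of consecutive summands, obtaining
\[
\frac{a_{d+1}}{a_d} = \frac{\bigl(d-(n-1)\bigr)\bigl(d+(n+k-1)\bigr)}{(d+1)\bigl(d-(kn+n-2)\bigr)},
\]
which identifies the sum as $a_0\cdot {}_2F_1\bigl(-(n-1),\, n+k-1;\, -(kn+n-2);\, 1\bigr)$ with initial term $a_0 = \binom{kn+n-2}{n-1}\binom{n+k-2}{k-1}$. Because the first upper parameter is a nonpositive integer, the Chu--Vandermonde summation ${}_2F_1(-N,b;c;1) = (c-b)_N/(c)_N$ applies. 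Rewriting the two resulting Pochhammer symbols via $(-m)_{n-1} = (-1)^{n-1}\,m!/(m-n+1)!$ turns the evaluation into a ratio of four factorials.

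Once this is substituted, the prefactor $\binom{kn+n-2}{n-1}=(kn+n-2)!/[(n-1)!(kn-1)!]$ cancels cleanly against the factorials coming from the Pochhammer ratio, leaving the intermediate form
\[
|\DLD_{n-1}(k)| = \frac{1}{n}\binom{n+k-2}{k-1}\binom{kn+k+2n-3}{n-1}.
\]
Finally, the elementary identity $\binom{N}{n}=\frac{N-n+1}{n}\binom{N}{n-1}$ with $N=kn+k+2n-3$, for which $N-n+1=kn+k+n-2=k(n+1)+n-2$, rewrites this as exactly the expression in the statement. The only nontrivial step is recognizing the reduced sum as a ${}_2F_1$ evaluable by Chu--Vandermonde; everything before and after this recognition is routine bookkeeping.
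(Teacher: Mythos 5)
Your proposal is correct and follows essentially the same route as the paper: both substitute Proposition~\ref{prop:DLD&LD} and Corollary~\ref{cor:LD}, recognize the resulting sum as $\Hyper21{-n+1,\,k+n-1}{-kn-n+2}{1}$, and close it with the Chu--Vandermonde summation before routine factorial simplification. The term ratio, hypergeometric parameters, and final bookkeeping all match the paper's computation.
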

\begin{proof}
We will use the standard notation in hypergeometric series, see for example \cite[Chapter 2]{AAR_SP}. By Proposition \ref{prop:DLD&LD} and Corollary~\ref{cor:LD},
\begin{align}
\notag    |\DLD_{n-1}(k)|&=\sum_{d=0}^{n-1}|\LD_{n-1}(k,d)|\Multiset{k}{d+n-1}
    \\ \notag &=\sum_{d=0}^{n-1}\frac{1}{n}\Multiset{kn}{n-d-1}\Multiset{n}{d}\Multiset{k}{d+n-1}
    \\ \label{eq:2} &=\frac{(kn+n-2)!(k+n-2)!}{n!(kn-1)!(n-1)!(k-1)!}\Hyper21{-n+1, k+n-1}{-kn-n+2}{1} .
\end{align}
By the Vandermonde summation formula \cite[Corollary~2.2.3]{AAR_SP}
\[
  \Hyper21{-n,b}{c}{1} = \frac{(c-b)_n}{(c)_n},
\]
we have
\begin{equation}
  \label{eq:4}
\Hyper21{-n+1, k+n-1}{-kn-n+2}{1}  = \frac{(-kn-2n-k+3)_{n-1}}{(-kn-n+2)_{n-1}}
=\frac{(kn+2n+k-3)!}{(kn+n+k-2)!}\frac{(kn-1)!}{(kn+n-2)!}. 
\end{equation}
By \eqref{eq:2} and \eqref{eq:4}  we obtain the result.
\end{proof}

The constant term identities in Theorems~\ref{Thm:PS} and \ref{Thm:Car} follow
immediately from Theorems~\ref{Thm:PS=LD}, \ref{Thm:Car=DLD} and Corollary~\ref{cor:DLD}.
This completes the proof of Theorems~\ref{thm:PS} and \ref{thm:main} in the introduction.

\section{More Properties of Labeled Dyck Paths}
\label{sec:more-prop-label}

 In this section we find volumes of flow polytopes of Pitman--Stanley graph $\PS_{n+1}$ and Caracol
graph $\Car_{n+1}$ for certain flow vectors using Lidskii's formula and $k$-labeled Dyck prefixes.

A \emph{$k$-labeled Dyck prefix} is the part of a $k$-labeled Dyck path from $(0,0)$ to $(a,b)$ for some point $(a,b)$
in the path.  The set of $k$-labeled Dyck prefixes from $(0,0)$ to $(2n-i,i)$ is denoted by $\Dyck_{n,i}$.  We also
denote by $\Dyck_{n,i}(k;a_0,a_1,\dots,a_k)$ the set of $k$-labeled Dyck prefixes in $\Dyck_{n,i}$ such that the number
of down-steps labeled $j$ is $a_j$ for $0\le j\le k$.

Recall that $\Dyck_n(k)$ is in bijection with the set of $k$-Dyck words of length $2n$.  Therefore one can consider an element in $\Dyck_{n,i}$ as a $k$-Dyck word of length $2n$ whose last $i$ letters are
$D_0$'s. See Figure~\ref{fig:LD2}.

\begin{figure}
\begin{tikzpicture}[scale=0.6]
\def\blnum(#1,#2)[#3]{\node[below right] at (#1+0.4,#2) {#3};}
  \draw[black!20] (0,0) grid (20,6);
  \draw[line width=1.5pt] (0,0)--
  ++(1,1)--++(1,-1)--++(1,1)--++(1,1)--  ++(1,1)--  ++(1,1)--  ++(1,-1)-- ++(1,1)--  ++(1,-1)--  ++(1,-1)--  ++(1,1)--  ++(1,1)--  ++(1,1)-- ++(1,-1)--  ++(1,-1)--  ++(1,-1)--  ++(1,1)--  ++(1,-1);
\draw[dashed] (18,2)--  ++(1,-1)--  ++(1,-1);
\blnum(1,1)[0];
\blnum(6,4)[3];
\blnum(8,4)[1];
\blnum(9,3)[1];
\blnum(13,5)[2];
\blnum(14,4)[2];
\blnum(15,3)[0];
\blnum(17,3)[1];
\node[below] at (0,0) {$(0,0)$};
\node[right] at (18,2) {$(18,2)$};
\end{tikzpicture}
\caption{An element of $\Dyck_{20,2}(3;2,3,2,1)$ whose steps are drawn in solid segments.  By appending it with the two
  dashed down-steps, this element can be considered as an element in $\Dyck_{20}(3)$. This path can be expressed as
  $UD_0UUUUD_3UD_1D_1UUUD_2D_2D_0UD_1D_0D_0$, where the last two $D_0$ steps correspond to the dashed down-steps.}
\label{fig:LD2}
\end{figure}

Now we find the cardinality of $\Dyck_{n,i}(k;a_0,a_1,\dots,a_k)$.

\begin{lemma}
\label{lem:dyck_n,i}
We have
\[
|\Dyck_{n,i}(k;a_0,a_1,\dots,a_k)|=\dfrac{i+1}{n+1}\prod_{j=0}^{k}\Multiset{n+1}{a_j}.
\]
\end{lemma}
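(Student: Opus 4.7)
The plan is to adapt the cyclic-lemma argument used in the proof of Theorem~\ref{thm:dyck}. Define the set $\EW_{n,i}(k;a_0,a_1,\dots,a_k)$ of sequences $w=w_1w_2\cdots w_{2n-i+1}$ over $\{U,D_0,D_1,\dots,D_k\}$ with $w_1=U$, exactly $n+1$ letters $U$ and $a_j$ letters $D_j$ for $0\le j\le k$, and with the labels of any consecutive $D$'s in weakly decreasing order. By the same slot-insertion argument as in the proof of Theorem~\ref{thm:dyck}---the $D_j$'s are distributed as multisets into the $n+1$ slots immediately following each of the $n+1$ $U$'s, and the labels within each slot are forced into weakly decreasing order---one obtains
\[
|\EW_{n,i}(k;a_0,a_1,\dots,a_k)|=\prod_{j=0}^{k}\Multiset{n+1}{a_j}.
\]

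Next, call a cyclic rotation of $w$ \emph{good} if the resulting word $w^*$ satisfies $w^*_1=U$ and all partial sums (with $U\mapsto +1$ and every $D_j\mapsto -1$) are strictly positive. Such a $w^*$ must then have the form $w^*=Uv$ with $v\in\Dyck_{n,i}(k;a_0,a_1,\dots,a_k)$, and conversely every element of $\Dyck_{n,i}(k;a_0,a_1,\dots,a_k)$ arises this way. The total sum of $w$ is $(n+1)-(n-i)=i+1>0$, and any good rotation must begin with a letter of value $+1$, i.e., with $U$. The Dvoretzky--Motzkin cyclic lemma therefore ensures that exactly $i+1$ of the $n+1$ $U$-starting cyclic rotations of $w$ are good.

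Now define the shifting operator $s$ on $\EW_{n,i}(k;a_0,a_1,\dots,a_k)$ exactly as in Section~\ref{sec:cyclic-lemma}, rotating the word to start at its last $U$; it is a bijection whose iterates $s^0(w),s^1(w),\dots,s^n(w)$ run over the $n+1$ $U$-starting cyclic rotations of $w$. Counting pairs $(w,j)$ with $w\in\EW_{n,i}(k;a_0,a_1,\dots,a_k)$, $0\le j\le n$, and $s^j(w)$ good in two ways yields
\[
(i+1)\,|\EW_{n,i}(k;a_0,a_1,\dots,a_k)|=(n+1)\,|\Dyck_{n,i}(k;a_0,a_1,\dots,a_k)|,
\]
because each $w$ has exactly $i+1$ good iterates and each good $w^*$ has the $n+1$ preimages $s^{-j}(w^*)$ for $0\le j\le n$. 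Combining with the first display proves the lemma.

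The only real obstacle is the cyclic-lemma step, which requires identifying good rotations and checking that all of them start at $U$-positions. The advantage of double counting ordered pairs $(w,j)$ instead of distinct words in cyclic orbits is that the argument goes through without any separate treatment of periodic extended words.
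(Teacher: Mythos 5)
Your proof is correct, and it reaches the same count $(i+1)\prod_{j=0}^{k}\multiset{n+1}{a_j}=(n+1)|\Dyck_{n,i}(k;a_0,\dots,a_k)|$ as the paper, but the cyclic step is carried out differently. The paper works with the same set $\EW_{n,i}(k)$ and the same slot-insertion count, but then defines, for each word, the $i+1$ \emph{index candidates} surviving the cyclic $UD$-cancellation, pads the word with $i$ extra $D_0$'s in front of a chosen candidate so as to land in $\EW_n(k)$, and finally reuses the $(n+1)$-to-$1$ map $p$ of Proposition~\ref{prop:n-to-1}. You instead apply the Dvoretzky--Motzkin cycle lemma directly: the step sum is $(n+1)-(n-i)=i+1$, so exactly $i+1$ of the $2n-i+1$ rotations have all partial sums positive, each of these necessarily starts at a $U$, and the double count of ordered pairs $(w,j)$ with $s^j(w)$ good yields the identity in one stroke. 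Your route is shorter and self-contained (no padding construction, no appeal to Lemma~\ref{lem:cyclic} or the index-candidate algorithm), and counting pairs rather than orbit representatives cleanly disposes of periodic words; what the paper's version buys is the recycling of machinery already built for Theorem~\ref{thm:dyck}. One small point you should make explicit: a rotation beginning at a $U$-position creates only one new adjacency, namely with the original first letter $w_1=U$, so no new pair of consecutive down-steps appears and the weak-decrease condition on labels is preserved; this is what guarantees both that $s$ maps $\EW_{n,i}(k;a_0,\dots,a_k)$ to itself bijectively and that a good rotation, after stripping its leading $U$, really is an element of $\Dyck_{n,i}(k;a_0,\dots,a_k)$.
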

\begin{proof}
Let $\EW_{n,i}(k)$ be the set of words $w=w_1\dots w_{2n-i+1}$ of letters in
$\{U,D_0,D_1,\dots,D_k\}$ with $w_1=U$ and  exactly $n+1$ $U$'s that satisfies the third condition of a $k$-labeled Dyck
word: the labels of any consecutive $D_i$'s are in weakly decreasing order, i.e.,
if $w_i=D_a$ and $w_{i+1}=D_b$, then $a\ge b$.
For $w \in \EW_{n,i}(k)$, an \emph{index candidate} of $w$ is an integer $j$
satisfying the following condition:
\begin{itemize}
\item Find a letter $U$ followed by a $D_i$ for some $0\le i\le k$ in cyclic order and delete this pair $U$ and $D_i$
  from $w$. Repeat this until there are $i+1$ letters left, which must be all $U$'s. Then the $j$th $U$ in the original word $w$ is one of the
  remaining $U$'s.
\end{itemize}
Note that there are  $i+1$ index candidates for any $w\in \EW_{n,i}(k)$. 

Let $\EW_{n,i}'(k)$ be the set of words obtained from a word $w\in\EW_{n,i}(k)$ by adding $i$ $D_0$'s to the left of the
$j$th $U$ in $w$ for an index candidate $j$ of $w$.  Note that $\EW_{n,i}'(k)$ is a subset of $\EW_{n}(k)$ which is defined
in Section \ref{sec:cyclic-lemma}. Thus every $w'\in \EW_{n,i}'(k)$ has length $2n+1$ and the unique index $\ind(w')$
exists. Then by the map $p$ defined in Proposition \ref{prop:n-to-1}, there is an $(n+1)$-to-$1$ map from
$\EW_{n,i}'(k)$ to $\Dyck_{n,i}(k)$. Since there are $(i+1)$ ways to choose an index candidate for $w\in \EW_{n,i}(k)$,
we have
\[
(i+1)\prod_{j=0}^k\Multiset{n+1}{a_j}=(n+1)|\Dyck_{n,i}(k;a_0,a_1,\dots,a_k)|,
\]
which completes the proof.
\end{proof}

\subsection{Volumes of flow polytopes for the Pitman--Stanley graph.}
\label{subsec:PS}
Recall that $\vol\FF_{\PS_{n+1}}(a^n)$ and $\vol\FF_{\PS_{n+1}}(a,b^{n-1})$ were
computed in \cite{Benedetti2019} and \cite{PitmanStanley}.  In this subsection
using Lemma~\ref{lem:dyck_n,i} we compute
$\vol\FF_{\PS_{n+1}}(a_1,\dots ,a_k,b^{n-k})$ for $0\le k\le 3$.  For
simplicity, we will consider $\PS_{n+2}$ instead of $\PS_{n+1}$.

Note that $\PS_{n+2}$ has $2n+1$ edges and $\vec{t}:=(\outdeg(1)-1,\dots,\outdeg(n+1)-1)=(1,1,\dots,1,0)$.  Since
$\PS_{n+2}|_{n+1}$ is the path graph on $[n+1]$ with edges $(i, i+1)$ for $1\le i\le n$, one can easily see that
$K_{\PS_{n+2}|_{n+1}}(\vec{s}-\vec{t})=1$ for any sequence $\vec{s}\ge \vec t$.  Moreover, if
$\vec s=(s_1,\dots,s_{n+1})\ge \vec t$, then $s_{n+1}=0$.  Thus Lidskii's formula (Theorem~\ref{thm:lidskii}) implies
\[
    \vol\FF_{\PS_{n+2}}(a_1,\dots ,a_{n+1})=\sum_{\substack{s_1+\dots+s_{n}=n\\ (s_1,\dots,s_{n})\ge
        (1^{n})}}\binom{n}{s_{1},s_{2},\dots ,s_{n}}a_{1}^{s_1}\dots a_{n}^{s_{n}} .
\]
Thus, we have
\begin{align}
\notag & \vol\FF_{\PS_{n+2}}(a_1,\dots ,a_k,b^{n-k+1}) \\
\notag &=\sum_{\substack{s_1+\dots+s_{n}=n\\ (s_1,\dots,s_{n})\ge (1^{n})}}
\binom{n}{s_{1},s_{2},\dots ,s_{n}}a_{1}^{s_1}\dots a_{k}^{s_{k}} b^{s_{k+1}+\dots+s_{n}}\\
\notag   &=\sum_{m=0}^{n} \sum_{\substack{s_1+\dots+s_{k}=m\\ (s_1,\dots,s_{k})\ge (1^k)}}
\sum_{\substack{s_{k+1}+\dots+s_{n}=n-m\\ (m,s_{k+1},\dots,s_{n})\ge (k,1^{n-k})}}  
\binom{n}{s_{1},s_{2},\dots ,s_{n}}a_{1}^{s_1}\dots a_{k}^{s_{k}} b^{n-m}\\
\label{eq:PSA}  &=\sum_{m=0}^{n} \binom nm b^{n-m} A_{k,m}(a_1,\dots,a_k)B_{n,k,m},
\end{align}
where
\begin{align*}
A_{k,m}(a_1,\dots,a_k) &= \sum_{\substack{s_1+\dots+s_{k}=m\\ (s_1,\dots,s_{k})\ge (1^k)}}
\binom{m}{s_{1},s_{2},\dots ,s_{k}}a_{1}^{s_1}\dots a_{k}^{s_{k}}, \\
B_{n,k,m}&= \sum_{\substack{s_{k+1}+\dots+s_{n}=n-m\\ (m,s_{k+1},\dots,s_{n})\ge (k,1^{n-k})}}  
\binom{n-m}{s_{k+1},\dots ,s_{n}}.
\end{align*}
The following lemma shows that $B_{n,k,m}$ has a simple formula.

\begin{lemma}
We have
\label{lem:B}
  \[
    B_{n,k,m} = (m-k+1)(n-k+1)^{n-m-1}.
  \]
\end{lemma}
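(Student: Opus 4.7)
The plan is to reinterpret $B_{n,k,m}$ as the cardinality of a set of generalized parking functions and then evaluate that cardinality via a cyclic (Pollak-style) argument. First, set $N = n-k$, $M = n-m$, and $c = m-k$, so that $M = N-c$. With the substitution $t_i = s_{k+i}$, the dominance condition in the definition of $B_{n,k,m}$ becomes $t_1 + \cdots + t_j \ge j - c$ for $1 \le j \le N$, together with $t_1 + \cdots + t_N = M$. Since $\binom{M}{t_1,\ldots,t_N}$ counts words $w = w_1 \cdots w_M \in [N]^M$ in which letter $i$ appears exactly $t_i$ times, a direct check shows that $B_{n,k,m}$ equals the number of words $w \in [N]^M$ whose sorted rearrangement $q_1 \le q_2 \le \cdots \le q_M$ satisfies $q_i \le i+c$ for every $i$.

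To count such words I would enlarge the alphabet to $[N+1]$ and appeal to the classical circular-parking model. Given $w \in [N+1]^M$, view $w_i$ as the preferred spot of car $i$ on a road with $N+1$ spots arranged in a cycle; car $i$ drives forward cyclically from $w_i$ until it finds an empty spot, yielding an unused set $U(w) \subset [N+1]$ of cardinality $c+1$. The cyclic shift $\rho(w)_i := (w_i \bmod (N+1)) + 1$ acts freely on $[N+1]^M$ with orbits of size $N+1$, and one checks that $U(\rho(w)) = \rho(U(w))$. Consequently, in every orbit of $\rho$ exactly $|U(w)| = c+1$ members satisfy $N+1 \in U$.

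The key point to verify is that $N+1 \in U(w)$ if and only if $w \in [N]^M$ and the sorted condition $q_i \le i+c$ holds. If $N+1$ is unused then no car can prefer it (the first such car would park there), so $w \in [N]^M$; and the fact that no car overflows past $N$ means linear parking on $[N]$ succeeds, which is the standard characterization of $q_i \le i+c$. The converse is direct. Combining the orbit count with this characterization yields
\[
B_{n,k,m} = \frac{c+1}{N+1} \cdot (N+1)^M = (c+1)(N+1)^{M-1} = (m-k+1)(n-k+1)^{n-m-1}.
\]
The main technicality is the parking-function equivalence above; once it is in hand, the cyclic orbit count is immediate.
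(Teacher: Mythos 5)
Your argument is correct, and it reaches the formula by a route that is genuinely different from the paper's, though philosophically parallel. The paper reads the dominance condition as saying that $UD^{s_n}\cdots UD^{s_{k+1}}$ is a Dyck prefix ending at height $m-k$, absorbs the multinomial coefficient as a choice of distinct labels on the $n-m$ down-steps, and thereby identifies $B_{n,k,m}$ with $|\Dyck_{n-k,m-k}(n-m-1;1^{n-m})|$, which it then evaluates by Lemma~\ref{lem:dyck_n,i} (itself a cycle-lemma count of labeled Dyck prefixes via index candidates). You instead absorb the multinomial coefficient as the choice of a word $w\in[N]^M$ with prescribed content, translate the partial-sum condition on the content vector into the sorted inequality $q_i\le i+c$, and reduce the lemma to the classical count $(c+1)(N+1)^{M-1}$ of parking functions with $M$ cars and $N=M+c$ spots, proved by Pollak's circular argument. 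The two reformulations are linked by the standard bijection between words and label sequences on down-steps, and both rest on a cyclic symmetry of order $N+1=n-k+1$; the paper's choice has the advantage of reusing Lemma~\ref{lem:dyck_n,i}, which is needed again in the proof of Lemma~\ref{lem:Car2}, while yours is self-contained and leans on a well-known classical enumeration. The steps you flag all check out: the equivalence of $t_1+\cdots+t_j\ge j-c$ with $q_i\le i+c$, the equivariance $U(\rho(w))=\rho(U(w))$, and the characterization of $N+1\in U(w)$ as linear parking success on $[N]$ are standard and correct (the free-orbit count degenerates harmlessly in the trivial case $M=0$, i.e.\ $m=n$, where both sides equal $1$).
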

\begin{proof}
  For a sequence $(s_{k+1},\dots,s_n)$ of nonnegative integers, we have
  $s_{k+1}+\dots+s_n=n-m$ and $(m,s_{k+1},\dots,s_n)\ge (k,1^{n-k})$ if and only if
  $UD^{s_n}UD^{s_{n-1}}\dots UD^{s_{k+1}}U^kD^m$ is a Dyck path from $(0,0)$ to $(2n,0)$, or equivalently,
$UD^{s_n}UD^{s_{n-1}}\dots UD^{s_{k+1}}$ is a Dyck prefix from $(0,0)$ to $(2n-m-k,m-k)$. Moreover, if such a sequence
  $(s_{k+1},\dots, s_n)$ is given, $\binom{n-m}{s_{k+1},\dots ,s_{n}}$ is the number of ways to label the down steps of
  this Dyck prefix with labels from $\{0,1,\dots,n-m-1\}$ such that there is exactly one down step labeled $j$ for each
  $0\le j\le n-m-1$ and the labels of consecutive down steps are in decreasing order. Thus
    \[
    B_{n,k,m} =|\Dyck_{n-k,m-k}(n-m-1;1^{n-m})|.
  \]
By Lemma~\ref{lem:dyck_n,i} we obtain the formula.
\end{proof}

By \eqref{eq:PSA} and Lemma~\ref{lem:B}, we obtain the following proposition.
\begin{proposition}\label{prop:vPS}
We have
\begin{multline*}
\vol\FF_{\PS_{n+2}}(a_1,\dots ,a_k,b^{n-k+1}) \\
=\sum_{m=0}^{n} \binom nm b^{n-m} (m-k+1)(n-k+1)^{n-m-1} A_{k,m}(a_1,\dots,a_k),
\end{multline*}
where
\[
A_{k,m}(a_1,\dots,a_k) = \sum_{\substack{s_1+\dots+s_{k}=m\\ (s_1,\dots,s_{k})\ge (1^k)}}
\binom{m}{s_{1},s_{2},\dots ,s_{k}}a_{1}^{s_1}\dots a_{k}^{s_{k}}.
\]
\end{proposition}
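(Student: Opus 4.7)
The plan is to combine the two pieces of machinery that have already been assembled in the paragraphs preceding the statement: the Lidskii decomposition in equation (\ref{eq:PSA}) and the closed-form evaluation of $B_{n,k,m}$ in Lemma~\ref{lem:B}. Concretely, (\ref{eq:PSA}) already expresses the volume as
\[
\vol\FF_{\PS_{n+2}}(a_1,\dots,a_k,b^{n-k+1}) = \sum_{m=0}^n \binom{n}{m} b^{n-m} A_{k,m}(a_1,\dots,a_k)\, B_{n,k,m},
\]
where the derivation uses Theorem~\ref{thm:lidskii} together with the observation that $K_{\PS_{n+2}|_{n+1}}(\vec s - \vec t) = 1$ for every admissible $\vec s$, and the multinomial factorization $\binom{n}{s_1,\dots,s_n} = \binom{n}{m}\binom{m}{s_1,\dots,s_k}\binom{n-m}{s_{k+1},\dots,s_n}$ after splitting the Lidskii sum according to $m = s_1 + \cdots + s_k$. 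I would then simply substitute $B_{n,k,m} = (m-k+1)(n-k+1)^{n-m-1}$ from Lemma~\ref{lem:B} into this expression, which reproduces the stated identity verbatim.

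In this sense the proof is a one-line substitution, and the genuine work has already been absorbed into Lemma~\ref{lem:B}, whose main obstacle—identifying $B_{n,k,m}$ with the cardinality $|\Dyck_{n-k,m-k}(n-m-1;1^{n-m})|$ of a set of labeled Dyck prefixes and then invoking Lemma~\ref{lem:dyck_n,i}—is already dispatched. It is worth noting for safety that when $m < k$, the prefactor $(m-k+1)$ is nonpositive but $A_{k,m}(a_1,\dots,a_k)$ vanishes as well, because its summation range is empty under the dominance constraint $(s_1,\dots,s_k)\ge(1^k)$; hence no separate boundary analysis is required and the substitution really does yield the stated formula with no further manipulation.
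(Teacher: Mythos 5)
Your proposal is correct and is exactly the paper's own argument: the paper derives the proposition by substituting the formula of Lemma~\ref{lem:B} into equation~\eqref{eq:PSA}, precisely as you describe. Your additional check that the $m<k$ terms vanish because $A_{k,m}$ is an empty sum is a harmless (and accurate) bonus.
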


By Proposition~\ref{prop:vPS}, in order to compute $\vol\FF_{\PS_{n+2}}(a_1,\dots ,a_k,b^{n-k+1})$, it is enough to find
$A_{k,m}(a_1,\dots,a_k)$.  For $k=0,1$, using this method we can easily recover the following formulas in \cite{Benedetti2019,
  PitmanStanley}:
\begin{align*}
\vol\FF_{\PS_{n+2}}(a^{n+1}) & =a^{n}(n+1)^{n-1},\\
\vol\FF_{\PS_{n+2}}(a,b^{n}) & =a(a+nb)^{n-1}.
\end{align*}

We now find a formula for this volume for $\vol\FF_{\PS_{n+2}}(a_1,\dots ,a_k,b^{n-k+1})$ for $k=2,3$. 

\begin{proposition}
For positive integers $a$, $b$, and $c$, we have
\[
\vol\FF_{\PS_{n+2}}(a,b,c^{n-1})=(a+b-c)(a+b+(n-1)c)^{n-1}-(b-c)(b+(n-1)c)^{n-1}.
\]
\end{proposition}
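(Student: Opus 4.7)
The plan is to apply Proposition~\ref{prop:vPS} in the case $k=2$, with $a_1=a$, $a_2=b$, and the uniform value $b$ of the proposition replaced by $c$. This gives
\[
\vol\FF_{\PS_{n+2}}(a,b,c^{n-1}) = \sum_{m=0}^n \binom{n}{m} c^{n-m}(m-1)(n-1)^{n-m-1} A_{2,m}(a,b),
\]
so I need only compute $A_{2,m}(a,b)$ explicitly and then sum.

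For $A_{2,m}(a,b)$, the dominance condition $(s_1,s_2)\geq(1,1)$ is equivalent to $s_1\geq 1$ (together with $m=s_1+s_2\geq 2$), so $A_{2,0}=A_{2,1}=0$ and for $m\geq 2$,
\[
A_{2,m}(a,b)=\sum_{s_1=1}^{m}\binom{m}{s_1}a^{s_1}b^{m-s_1}=(a+b)^m-b^m.
\]
Thus the volume equals $T(a+b)-T(b)$, where
\[
T(x):=\sum_{m=2}^n \binom{n}{m}(m-1)(n-1)^{n-m-1}c^{n-m}x^m.
\]

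To close $T(x)$, I would set $u=(n-1)c$ and use $(x+u)^n=\sum_{m=0}^n\binom{n}{m}x^m u^{n-m}$ together with its derivative $nx(x+u)^{n-1}=\sum_{m=0}^n \binom{n}{m}m\,x^m u^{n-m}$. The $m=0,1$ terms contribute only the constant $-u^n$, so extending the sum back to $m=0$ and correcting gives
\[
T(x)=\frac{1}{n-1}\Bigl[nx(x+(n-1)c)^{n-1}-(x+(n-1)c)^n+((n-1)c)^n\Bigr].
\]

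Finally, the subtraction $T(a+b)-T(b)$ eliminates the constant $((n-1)c)^n$, and each pair simplifies by the factorization
\[
ny(y+(n-1)c)^{n-1}-(y+(n-1)c)^n=(n-1)(y+(n-1)c)^{n-1}(y-c),
\]
which yields $(a+b-c)(a+b+(n-1)c)^{n-1}-(b-c)(b+(n-1)c)^{n-1}$, as claimed. The main obstacle is purely bookkeeping: tracking the boundary contributions at $m=0,1$ when converting the finite sum into binomial-derivative form, and making sure the overall $\frac{1}{n-1}$ is absorbed by the factor $(n-1)$ coming out of each of the two closed-form expressions.
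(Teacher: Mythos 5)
Your proposal is correct and follows essentially the same route as the paper: apply Proposition~\ref{prop:vPS} with $k=2$, compute $A_{2,m}(a,b)=(a+b)^m-b^m$, and close the resulting sum via the binomial identity and its derivative (the paper's $f_n$ and $g_n$ are exactly your two generating identities with $u=(n-1)c$). Your bookkeeping of the $m=0,1$ boundary terms and the final factorization both check out.
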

\begin{proof}
By Proposition~\ref{prop:vPS},
\[
  \vol\FF_{\PS_{n+2}}(a,b,c^{n-1}) =
  \sum_{m=0}^n \binom{n}{m}c^{n-m}(m-1)(n-1)^{n-m-1} A_{2,m}(a,b),
\]
where $A_{2,0}(a,b) =A_{2,1}(a,b)=0$  and for $m>2$,
\[
    A_{2,m}(a,b)=\sum_{\substack{i+j=m\\ (i,j)\ge (1,1)}}\binom{m}{i,j}a^{i}b^j
    =(a+b)^m-b^m.
\]
Thus
\begin{align}
\notag\vol\FF_{\PS_{n+2}}(a,b,c^{n-1}) 
&= \sum_{m=2}^n \binom{n}{m}c^{n-m}\left((a+b)^m-b^m\right)(m-1)(n-1)^{n-m-1}\\
\label{eq:vol(a,c,b,)_1} &=\dfrac{1}{n-1}\left(g_n(a+b,c^{n-1})-g_n(b,c^{n-1})-f_n(a+b,c^{n-1})+f_n(b,c^{n-1})\right),
\end{align}
where
\begin{align*}
f_n(x,y)&=\sum_{m=0}^n \binom{n}{m}x^my^{n-m} = (x+y)^n,\\  
g_n(x,y)&=\sum_{m=0}^nm \binom{n}{m}x^my^{n-m} = nx(x+y)^{n-1}.
\end{align*}
Simplifying \eqref{eq:vol(a,c,b,)_1} we obtain the result.
\end{proof}

In a similar way one can check $A_{3,m}(a,b,c)=(a+b+c)^m-(b+c)^m-ac^{m-1}$ and obtain the following proposition. We omit the details.

\begin{proposition}
For positive integers $a$, $b$, $c$, and $d$, we have
\begin{multline*}
    \vol\FF_{\PS_{n+2}}(a,b,c,d^{n-2})=(a+b+c-2d)(a+b+c+(n-2)d)^{n-1}\\
    -(b+c-2d)(b+c+(n-2)d)^{n-1}-na(c-d)(c+(n-2)d)^{n-2}.
\end{multline*}
\end{proposition}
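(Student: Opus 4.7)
The plan is to apply Proposition~\ref{prop:vPS} with $k=3$, which yields
\[
\vol\FF_{\PS_{n+2}}(a,b,c,d^{n-2}) = \sum_{m=0}^n \binom{n}{m}d^{n-m}(m-2)(n-2)^{n-m-1}A_{3,m}(a,b,c),
\]
so the first task is to evaluate $A_{3,m}(a,b,c)$. Unwrapping the dominance condition $(s_1,s_2,s_3)\ge(1,1,1)$, for $m\ge 3$ it reduces to $s_1\ge 1$ and $s_1+s_2\ge 2$. I would obtain $A_{3,m}$ from the unrestricted multinomial sum $(a+b+c)^m$ by inclusion-exclusion: subtract the $s_1=0$ slice, which contributes $(b+c)^m$, and then subtract the only remaining violator $(s_1,s_2,s_3)=(1,0,m-1)$, which contributes $\binom{m}{1,0,m-1}ac^{m-1}=mac^{m-1}$. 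This gives $A_{3,m}(a,b,c)=(a+b+c)^m-(b+c)^m-mac^{m-1}$.

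Next I would substitute this into the above sum and split it into three pieces, handling each by the same generating-function trick used for $k=2$. The convenient substitution $y=(n-2)d$ converts the prefactor $d^{n-m}(n-2)^{n-m-1}$ into $y^{n-m}/(n-2)$. The first two pieces have the form $\tfrac{1}{n-2}\sum_m\binom{n}{m}(m-2)x^m y^{n-m}$ with $x\in\{a+b+c,\; b+c\}$. Using $\sum_m\binom{n}{m}x^m y^{n-m}=(x+y)^n$ and $\sum_m\binom{n}{m}m\,x^m y^{n-m}=nx(x+y)^{n-1}$, each such sum collapses to $(x-2d)(x+(n-2)d)^{n-1}$, producing the first two summands of the stated formula.

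For the third piece, $-a\sum_m\binom{n}{m}\,m(m-2)\,c^{m-1}d^{n-m}(n-2)^{n-m-1}$, I would apply the same substitution and then use the identity $m(m-2)=m(m-1)-m$ together with $\sum_m\binom{n}{m}m(m-1)c^{m}y^{n-m}=n(n-1)c^2(c+y)^{n-2}$. The factors of $(n-2)$ cancel, leaving exactly $-na(c-d)(c+(n-2)d)^{n-2}$. Summing the three contributions yields the proposition.

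I do not expect a serious obstacle; the computations are entirely parallel to the $k=2$ case. The only subtle point is to retain the factor $m$ from the multinomial coefficient $\binom{m}{1,0,m-1}$ in $A_{3,m}$: it is precisely this $m$, after pairing with $m-2$, that produces the $(c-d)(c+(n-2)d)^{n-2}$ factor (one degree lower than the first two terms) and the leading $n$ in the last term of the final formula.
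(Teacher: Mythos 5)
Your proposal is correct and follows essentially the same route as the paper, which also reduces the computation to Proposition~\ref{prop:vPS} with $k=3$, evaluates $A_{3,m}(a,b,c)$, and collapses the resulting sums with the binomial identities from the $k=2$ case. Note that your formula $A_{3,m}(a,b,c)=(a+b+c)^m-(b+c)^m-mac^{m-1}$ (with the factor $m$ coming from $\binom{m}{1,0,m-1}$) is the correct one --- the version displayed in the paper omits this $m$, which is a typo, as one checks for instance at $n=3$ --- and your observation that the $(m-2)$ factor annihilates the $m=2$ term, where the closed form for $A_{3,m}$ fails to vanish, is exactly the point that makes the substitution over all $0\le m\le n$ legitimate.
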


\subsection{Volumes of flow polytopes for the Caracol graph.}
In \cite{Benedetti2019}, Benedetti et al. computed $\vol\FF_{\Car_{n+1}}(a^n)$
and $\vol\FF_{\Car_{n+1}}(a,b^{n-1})$ using unified diagrams and conjectured a
formula for $\vol\FF_{\Car_{n+1}}(a,b,c^{n-2})$, see
Proposition~\ref{prop:VolCar(a,b,c)} below. In this subsection we prove their
conjecture.  As before, for simplicity, we consider $\Car_{n+2}$ instead of
$\Car_{n+1}$.

The Caracol graph $\Car_{n+2}$ has $3n-1$ edges and
$\vec{t}':=(\outdeg(1)-1,\dots,\outdeg(n+1)-1)=(n-1,1,1,\dots,1,0)$. Note that
$\vec{s}=(s_1,\dots,s_{n+1}) \geq \vec{t}'$ implies $s_{n+1}=0$. Thus, by Lidskii's formula,
\begin{multline*}
\vol\FF_{\Car_{n+2}}(a_1,\dots  ,a_{n+1}) \\
= \sum_{\substack{s_1+\dots+s_{n}=2n-2\\(s_1,\dots,s_{n})\geq(n-1,1^{n-1})}}
  \binom{2n-2}{s_{1},\dots ,s_{n}}a_{1}^{s_1}\dots a_{n}^{s_{n}}K_{\Car_{n+2}|_{n+1}}((s_1,\dots,s_{n})-(n-1,1^{n-1})) .
\end{multline*}

Our goal is to find a formula for $X:=\vol\FF_{\Car_{n+2}}(a,b,c^{n-1})$. By the
above equation,
\begin{multline*}
X= \sum_{\substack{s_1+\dots+s_{n}=2n-2\\ (s_1,\dots,s_{n})\ge (n-1,1^{n-1})}}
\binom{2n-2}{s_{1},s_{2},\dots ,s_{n}}a^{s_1}b^{s_{2}} c^{s_{3}+\dots+s_{n}}\\
\times K_{\Car_{n+2}|_{n+1}}(s_1-n+1,s_2-1,\dots,s_n-1).
\end{multline*}
By replacing $s_1$ by $s_1+n-2$, we obtain
\begin{multline*}
X=\sum_{\substack{s_1+\dots+s_{n}=n\\ (s_1,\dots,s_{n})\ge (1^{n})}}
\binom{2n-2}{s_{1}+n-2,s_{2},\dots ,s_{n}}a^{s_1+n-2}b^{s_{2}} c^{s_{3}+\dots+s_{n}}\\
\times K_{\Car_{n+2}|_{n+1}}(s_1-1,\dots,s_n-1).
\end{multline*}
Considering $p=s_1$, $q=s_2$, and $r=s_3+\dots+s_n$ separately, we can rewrite the above equation as
\begin{equation}
  \label{eq:8}
X =  \sum_{\substack{p+q+r=n\\ (p,q)\ge (1,1)}}
\binom{2n-2}{p+n-2,q,r}a^{p+n-2}b^q c^r A(p,q,r),
\end{equation}
where
\[
A(p,q,r)=\sum_{\substack{s_3+\dots+s_{n}=r\\(p,q,s_3,\dots,s_{n})\geq (1^{n})}}
\binom{r}{s_3,\dots,s_{n}} K_{\Car_{n+2}|_{n+1}}(p-1,q-1,s_3-1,\dots,s_n-1).
\]

In the next two lemmas we find a formula for $A(p,q,r)$ using labeled Dyck paths.

Note that every Dyck path of length $2n$ can be expressed uniquely as a sequence
$UD^{d_n}UD^{d_{n-1}}\dots UD^{d_1}$ of up steps $U$ and down steps $D$ for some
$n$-tuple $(d_1,\dots,d_n)\in\ZZ_{\ge0}^n$ such that $d_1+\dots+d_n=n$ and
$(d_1,\dots,d_n)\ge (1^n)$.  For nonnegative integers $a_1,\dots,a_n$ whose sum
is at most $n$, let
\[
D_n(a_1,\dots,a_n) := \{UD^{d_n}UD^{d_{n-1}}\dots UD^{d_1} \in \Dyck_n: d_i\ge a_i\}.
\]

\begin{lemma}
\label{lem:Car1}
Let $(s_1,\dots,s_n)\in \ZZ_{\geq 0}^n$ with $\sum_{i=1}^n s_i=n$ and
$(s_1,\dots,s_n)\geq (1^n)$.  Then
\[
  K_{\Car_{n+2}|_{n+1}}(s_1-1,\dots,s_{n}-1) = |D_{n-1}(s_2,\dots,s_n)|.
\]
\end{lemma}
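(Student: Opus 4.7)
The plan is to parametrize the $(s_1-1,\dots,s_n-1)$-flows of $G:=\Car_{n+2}|_{n+1}$ by a small number of free nonnegative integers and then identify those parameters with the excesses $d_i-s_{i+1}$ appearing on the Dyck-path side.

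First I would note that $G$ has edge set $\{(i,i+1):1\le i\le n\}\cup\{(1,j):3\le j\le n+1\}$. Denote the flow on $(i,i+1)$ by $x_i$ and the flow on $(1,j)$ by $y_j$. Conservation at vertex $n+1$ forces $x_n=y_{n+1}=0$, while conservation at vertex $2$ and at each vertex $3\le i\le n$ gives the recursion
\[
x_2=x_1+s_2-1,\qquad x_i=x_{i-1}+y_i+(s_i-1)\quad(3\le i\le n).
\]
Consequently a flow is uniquely determined by the tuple $(x_1,y_3,y_4,\dots,y_n)\in\ZZ_{\ge 0}^{n-1}$, subject to the closure constraint $x_1+y_3+\cdots+y_n=s_1-1$ (which is equivalent to $x_n=0$) together with the intermediate positivity requirements $x_2,\dots,x_{n-1}\ge 0$.

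Next I would define the map
\[
\phi(x_1,y_3,\dots,y_n)=(d_1,\dots,d_{n-1}),\qquad d_1=s_2+x_1,\quad d_i=s_{i+1}+y_{i+1}\ \ (2\le i\le n-1),
\]
and check that the three conditions defining $D_{n-1}(s_2,\dots,s_n)$ translate exactly into the conditions on the flow parameters. The inequalities $d_i\ge s_{i+1}$ are immediate from $x_1,y_j\ge 0$; the total $\sum_{i=1}^{n-1}d_i=n-1$ is equivalent to the closure constraint; and the Dyck-prefix condition $d_1+\cdots+d_j\ge j$ for $1\le j\le n-2$ is equivalent to $x_{j+1}\ge 0$, because substituting the recursion yields the key identity $d_1+\cdots+d_j=x_{j+1}+j$. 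Since $\phi$ is manifestly invertible, it is the desired bijection.

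The only creative step is the choice of $\phi$: recognizing that the ``excess'' $d_i-s_{i+1}$ of a Dyck path corresponds to the free arc flow $y_{i+1}$ for $i\ge 2$, while the first excess $d_1-s_2$ absorbs the remaining free flow $x_1$ on the initial path edge. Once this matching is identified, each verification reduces to a direct substitution into the recursion, so there is no serious obstacle beyond the bookkeeping.
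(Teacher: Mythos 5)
Your proof is correct and follows essentially the same route as the paper: both parametrize the flows by the free values on the edge $(1,2)$ and the edges $(1,j)$ for $3\le j\le n$, observe that conservation at vertex $n+1$ kills the flows on $(n,n+1)$ and $(1,n+1)$, and then set $d_1=s_2+x_1$, $d_i=s_{i+1}+y_{i+1}$, so that nonnegativity of the induced path flows becomes the dominance/Dyck condition $d_1+\cdots+d_j\ge j$. The only difference is cosmetic: the paper states the intermediate step in terms of dominance order of $(s_2,\dots,s_n)+(b_{12},\dots,b_{1n})$, while you verify the equivalent identity $d_1+\cdots+d_j=x_{j+1}+j$ directly.
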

\begin{proof}
Note that $\Car_{n+2}|_{n+1}$ is a directed graph on $[n+1]$ with edges
$(1,i)$ for $2\leq i \leq n+1$ and $(j,j+1)$ for $2\leq j \leq n$. By
definition of Kostant partition function,
$K_{\Car_{n+2}|_{n+1}}((s_1,\dots,s_{n})-(1^n))$ is the number of nonnegative
integer solutions $\{b_{1,i}, b_{j,j+1}: 2\leq i \leq n+1, 2\leq j \leq n \}$ satisfying 
\begin{align*}
    b_{1,2}+b_{1,3}+\dots+b_{1,n+1}&=s_1-1,
    \\b_{2,3}-b_{1,2}&=s_2-1,
    \\b_{j,j+1}-b_{j-1,j}-b_{1,j}&=s_{j}-1, \qquad (3\le j\le n)
    \\-b_{n,n+1}-b_{1,n+1}&=-(s_1+\dots+s_n)+n.
\end{align*}
Since $\sum_{i=1}s_i = n$, we must have $b_{1,n+1}=b_{n,n+1}=0$ and
 the above equations are equivalent to
\begin{align*}
 b_{1,2}+b_{1,3}+\dots+b_{1,n}&=s_1-1,\\
  b_{j,j+1}&=(s_2+\dots+s_{j})+(b_{1,2}+\dots+b_{1,j})-(j-1), \qquad (2\le j\le n).
\end{align*}
Thus the integers $b_{j,j+1}$ for $2\le j\le n$ are completely determined by the
integers $b_{1,i}$ for $2\le i\le n+1$. Moreover, the condition $b_{j,j+1}\ge0$
for $2\le j\le n$ is equivalent to $(s_2,\dots,s_{n})+(b_{12},\dots,b_{1n})\geq (1^{n-1})$
in dominance order. Hence $K_{\Car_{n+2}|_{n+1}}((s_1,\dots,s_{n})-(1^n))$ is
the number of $(n-1)$-tuples $(b_{12},b_{13},\dots,b_{1n})\in \ZZ_{\geq
  0}^{n-1}$ such that $b_{12}+b_{13}+\dots+b_{1n}=s_1-1$ and
$(s_2,\dots,s_{n})+(b_{12},\dots,b_{1n})\geq (1^{n-1})$.

Now let $d_i=s_{i+1}+b_{1,i+1}$ for $1\le i\le n-1$. Then we can reinterprete
$K_{\Car_{n+2}|_{n+1}}((s_1,\dots,s_{n})-(1^n))$ as the number of $(n-1)$-tuples
$(d_1,\dots,d_{n-1})\in \ZZ_{\geq 0}^{n-1}$ such that $d_1+\dots+d_{n-1}=n-1$,
$(d_1,\dots,d_{n-1})\geq (1^{n-1})$ and $d_i\ge s_{i+1}$ for $1\le i\le
n-1$. Since the condition $(d_1,\dots,d_{n-1})\geq (1^{n-1})$ is equivalent to
the condition $UD^{d_{n-1}} UD^{d_{n-2}}\cdots UD^{d_{1}}\in\Dyck_{n-1}$, we
obtain the desired result.
\end{proof}

\begin{lemma}
Let $p,q$ and $r$ be fixed nonnegative integers with $p+q+r=n$ and $(p,q)\geq (1,1)$.
Then
\label{lem:Car2}
\[
A(p,q,r)   =(p+q-1)\binom{n+p-2}{n-1}(n-1)^{r-1}-\binom{n+p-2}{n}(n-1)^r.
\]
\end{lemma}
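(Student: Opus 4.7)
The plan is to first reduce $A(p,q,r)$ to a clean combinatorial count by invoking Lemma \ref{lem:Car1}, and then evaluate the count by adapting the cyclic-lemma technique from Section \ref{sec:cyclic-lemma}. By Lemma \ref{lem:Car1}, each Kostant partition function in the sum equals $|D_{n-1}(q,s_3,\dots,s_n)|$. A useful preliminary observation is that the dominance condition $(p,q,s_3,\dots,s_n)\ge(1^n)$ in the summation is redundant: whenever it fails, the prescribed block lower bounds exceed what any Dyck prefix of length $2(n-1)$ can absorb, so $|D_{n-1}(q,s_3,\dots,s_n)|$ vanishes. Hence one may sum freely over nonnegative tuples $(s_3,\dots,s_n)$ with $s_3+\cdots+s_n=r$.

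After this reduction, $A(p,q,r)$ counts pairs $(P,f)$ where $P=UD^{d_{n-1}}\cdots UD^{d_1}\in\Dyck_{n-1}$ has $d_1\ge q$, and $f\colon[r]\to\{3,\dots,n\}$ satisfies $|f^{-1}(i)|\le d_{i-1}$ for $3\le i\le n$; here the multinomial coefficient $\binom{r}{s_3,\dots,s_n}$ enumerates the functions $f$ whose preimage-size profile is $(s_3,\dots,s_n)$. To count such pairs, the plan is to apply a cyclic argument parallel to Proposition \ref{prop:n-to-1}: embed each pair in an ``extended labeled word'' obtained by adjoining an extra up-step, let a shifting operator cycle through the $n$ up-step positions, and observe that in each orbit of size $n$ exactly one representative simultaneously satisfies the Dyck-path condition on $(d_1,\dots,d_{n-1})$ and the reservation $d_1\ge q$. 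The unrestricted total over extended words is a routine binomial/multinomial count, and dividing by the orbit size $n$ yields the single-term formula
\[
A(p,q,r) = \frac{nq+p-1}{n}\binom{n+p-2}{n-1}(n-1)^{r-1}.
\]
Using the identity $\binom{n+p-2}{n}=\tfrac{p-1}{n}\binom{n+p-2}{n-1}$, this rearranges algebraically to the two-term expression $(p+q-1)\binom{n+p-2}{n-1}(n-1)^{r-1}-\binom{n+p-2}{n}(n-1)^r$ claimed in the lemma.

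The hardest step is pinning down the extended-word model and the index-selection rule so that the orbit structure enforces both the Dyck-path condition and the constraint $d_1\ge q$ at once; the standard setup of Section \ref{sec:cyclic-lemma} encodes only the former. The two-term form of the claim strongly suggests an inclusion-exclusion flavor, where the first term $(p+q-1)\binom{n+p-2}{n-1}(n-1)^{r-1}$ counts all extended configurations with a distinguished ``index candidate'' from the $p+q-1$ admissible positions, and the second term $\binom{n+p-2}{n}(n-1)^r$ subtracts off the overcount coming from configurations whose distinguished candidate fails to be a true index. Once the right cyclic action is identified, the remaining work is a routine binomial simplification using the ratio $\binom{n+p-2}{n}/\binom{n+p-2}{n-1}=(p-1)/n$.
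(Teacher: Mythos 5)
Your reduction is sound and matches the first half of the paper's argument: invoking Lemma~\ref{lem:Car1}, noting that the dominance condition on $(p,q,s_3,\dots,s_n)$ is redundant because $D_{n-1}(q,s_3,\dots,s_n)$ is empty when it fails, and repackaging the multinomial coefficients as a count of Dyck paths with $d_1\ge q$ carrying distinct labels $1,\dots,r$ on the earlier descent runs. Your algebra is also correct: the two-term formula does equal $\frac{nq+p-1}{n}\binom{n+p-2}{n-1}(n-1)^{r-1}$. The gap is in the enumeration itself. Your key claim --- that in each orbit of size $n$ under the shifting operator exactly one representative satisfies \emph{both} the Dyck condition and $d_1\ge q$ --- is false: the cyclic lemma singles out the unique rotation that is a Dyck path, and that rotation may well have $d_1<q$, so a plain orbit count gives $A(p,q,r)=\frac{1}{n}(\text{total})$ only if the constraint $d_1\ge q$ is ignored. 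A symptom of the problem is that your target $\frac{nq+p-1}{n}\binom{n+p-2}{n-1}(n-1)^{r-1}$ is not $\frac1n$ times any product of binomials, so no single unweighted cyclic count can produce it; you acknowledge this by calling the choice of cyclic action the ``hardest step,'' but that step is precisely the proof and is left undone. The inclusion--exclusion reading you offer for the two terms also does not correspond to anything you construct.

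The missing idea, which is how the paper proceeds, is to condition on $d_1$ (the length of the final descent run, which is exactly where the constraint $q\le d_1\le n-1-r$ lives) and to recognize each slice as a set of labeled Dyck \emph{prefixes}: the portion $UD^{d_{n-1}}\cdots UD^{d_2}$ ending at height $d_1-1$, with $r$ distinct labels and $n-1-r-d_1$ zero labels, i.e.\ $\Dyck_{n-2,d_1-1}(r;n-1-r-d_1,1^r)$. The refined cyclic lemma for prefixes (Lemma~\ref{lem:dyck_n,i}), with its $i+1$ index candidates, is what handles the endpoint-height constraint, contributing the weight $\frac{d_1}{n-1}$; one then evaluates
\[
A(p,q,r)=(n-1)^{r-1}\sum_{d_1=q}^{n-1-r} d_1\binom{2n-3-r-d_1}{n-2}
\]
by the hockey-stick identity, and the two terms of the lemma arise from splitting the factor $d_1$ --- not from an inclusion--exclusion over index candidates. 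Without identifying this weighted prefix count (or some equivalent device), your plan does not close.
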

\begin{proof}
  By Lemma~\ref{lem:Car1},
  \[
A(p,q,r)= \sum_{\substack{s_3+\dots+s_{n}=r\\(p,q,s_3,\dots,s_{n})\geq (1^{n})}} 
\binom{r}{s_3,\dots,s_{n}} |D_{n-1}(q,s_3,\dots,s_n)|.
\]

We will give a combinatorial interpretation of each summand in the above
formula using labeled Dyck paths.  Let $s_3,\dots,s_n$ be nonnegative integers
satisfying $s_3+\dots+s_{n}=r$ and $(p,q,s_3,\dots,s_{n})\geq (1^{n})$.
Consider a Dyck path
$\pi=UD^{d_{n-1}}UD^{d_{n-2}}\dots UD^{d_{1}}\in D_{n-1}(q,s_3,\dots,s_n)$.
Then $d_1\ge q$ and $d_i\ge s_{i+1}$ for $2\le i\le n-1$. Now we label the down
steps of $\pi$ except the last consecutive down steps $D^{d_1}$ as follows:
\begin{itemize}
\item  Distribute the $r$ labels $1,2,\dots,r$, each label occurring
exactly once, to the sequences $D^{d_{n-1}}, D^{d_{n-2}},\dots, D^{d_2}$
consecutive down steps of $\pi$ so that the sequence $D^{d_i}$ gets $s_{i+1}$
labels. There are $\binom{r}{s_3,\dots,s_{n}}$ ways to do this.
\item Add $d_i-s_{i+1}$ zero labels to the sequence $D^{d_i}$ and
arrange the labels in weakly decreasing order. 
\end{itemize}
Considering the resulting objects of this process we obtain that
$\binom{r}{s_3,\dots,s_{n}} |D_{n-1}(q,s_3,\dots,s_n)|$ is the number of Dyck
paths $\pi=UD^{d_{n-1}}UD^{d_{n-2}}\dots UD^{d_{1}}$ together with a labeling on
the down steps except the last consecutive down steps $D^{d_{1}}$ satisfying the
following conditions:
\begin{enumerate}
\item  $d_i\ge s_{i+1}$ for $2\le i\le n-1$.
\item  $q\le d_1\le n-1-r$.
\item The number of down steps labeled $i$ is $1$ for $1\le i\le r$.
\item The number of down steps labeled $0$ is $n-1-r-d_1$.
\item The labels of any consecutive down steps are weakly decreasing.
\end{enumerate}
Summing over all possible $s_3,\dots,s_n$ we obtain that $A(p,q,r)$ is the
number of Dyck paths $\pi=UD^{d_{n-1}}UD^{d_{n-2}}\dots UD^{d_{1}}$ together
with a labeling on the down steps of its prefix
$UD^{d_{n-1}}UD^{d_{n-2}}\dots UD^{d_{2}}$ from $(0,0)$ to $(2n-3-d_1,d_1-1)$
satisfying the above conditions except (1). This implies 
\[
A(p,q,r)= \sum_{d_1=q}^{n-1-r} |\Dyck_{n-2,d_1-1}(r;n-1-r-d_1,1^{r})|.
\]
By Lemma~\ref{lem:dyck_n,i},
\begin{align*}
  A(p,q,r) & = \sum_{d_1=q}^{n-1-r} \frac{d_1}{n-1} \Multiset{n-1}{n-1-r-d_1} (n-1)^r\\
&= (n-1)^{r-1}\sum_{d_1=q}^{n-1-r} d_1 \binom{2n-3-r-d_1}{n-2}.
\end{align*}
Replacing $d_1$ by $n-1-r-i$, we have
\[
  A(p,q,r) = (n-1)^{r-1} \sum_{i=0}^{p-1} (n-1-r-i) \binom{n-2+i}{n-2}.
\]
Since
\begin{align*}
(n-1-r-i) \binom{n-2+i}{n-2} &= \left( 
(2n-2-r) - (n-1+i) \right) \binom{n-2+i}{n-2}\\
&= (2n-2-r) \binom{n-2+i}{n-2} - (n-1) \binom{n-1+i}{n-1}\\
&= (n-1-r) \binom{n-2+i}{n-2} - (n-1) \binom{n-2+i}{n-1},
\end{align*}
we have
\[
  A(p,q,r) = (n-1)^{r-1} \left((p+q-1) \sum_{i=0}^{p-1} \binom{n-2+i}{n-2}
-(n-1)\sum_{i=0}^{p-2}\binom{n-1+i}{n-1}    \right).
\]
Finally the identity $\sum_{i=0}^{k}\binom{m+i}{m}=\binom{m+k+1}{m+1}$
finishes the proof.
\end{proof}

Now we are ready to compute $X=\vol\FF_{\Car_{n+2}}(a,b,c^{n-1})$.

\begin{proposition}\cite[Conjecture~6.16]{Benedetti2019}
\label{prop:VolCar(a,b,c)}
For positive integers $a$, $b$, and $c$, we have
\[
\vol\FF_{\Car_{n+2}}(a,b,c^{n-1})=C_{n-1}a^{n-1}(a+nb)(a+b+(n-1)c)^{n-2}.
\]
\end{proposition}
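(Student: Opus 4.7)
The plan is to substitute the closed-form expression for $A(p,q,r)$ provided by Lemma~\ref{lem:Car2} into the expansion \eqref{eq:8} of $X:=\vol\FF_{\Car_{n+2}}(a,b,c^{n-1})$, and then simplify. Writing $A(p,q,r)=(p+q-1)\binom{n+p-2}{n-1}(n-1)^{r-1}-\binom{n+p-2}{n}(n-1)^r$ splits $X$ as a difference $X=S_1-S_2$. The key initial observation is that the products of binomial coefficients telescope pleasantly:
\[
\binom{2n-2}{p+n-2,q,r}\binom{n+p-2}{n-1}=\frac{(2n-2)!}{(n-1)!(p-1)!\,q!\,r!},\qquad \binom{2n-2}{p+n-2,q,r}\binom{n+p-2}{n}=\frac{(2n-2)!}{n!(p-2)!\,q!\,r!},
\]
so both $S_1$ and $S_2$ can be recast in terms of the Catalan number $C_{n-1}=\frac{(2n-2)!}{n!(n-1)!}$ together with ordinary multinomial coefficients.

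For $S_2$, the reindexing $p'=p-2$ (so that $p'+q+r=n-2$) turns the above coefficient into $C_{n-1}(n-1)\binom{n-2}{p',q,r}$, while $(n-1)^r c^r=((n-1)c)^r$. The dominance constraint $p+q\ge 2$ is automatic once $p\ge 2$, and the multinomial theorem immediately yields $S_2=C_{n-1}(n-1)\,a^n\,(a+b+(n-1)c)^{n-2}$. For $S_1$, I reindex by $p'=p-1$ so that $p'+q+r=n-1$; the remaining factor $(p+q-1)=(p'+q)$ is handled by the identity
\[
(p'+q)\binom{n-1}{p',q,r}=(n-1)\!\left[\binom{n-2}{p'-1,q,r}+\binom{n-2}{p',q-1,r}\right],
\]
which follows by writing $p'\binom{n-1}{p',q,r}=(n-1)\binom{n-2}{p'-1,q,r}$ and similarly for $q$ (with the convention that multinomials with a negative argument vanish). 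Each of the two resulting pieces, after one further shift of $p'$ or $q$ by one and an application of the multinomial theorem, collapses to $a(a+b+(n-1)c)^{n-2}$ or $b(a+b+(n-1)c)^{n-2}$ respectively, producing $S_1=C_{n-1}\,n\,a^{n-1}(a+b)(a+b+(n-1)c)^{n-2}$.

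Subtracting now gives
\[
X=C_{n-1}\,a^{n-1}\bigl[n(a+b)-(n-1)a\bigr](a+b+(n-1)c)^{n-2}=C_{n-1}\,a^{n-1}(a+nb)(a+b+(n-1)c)^{n-2},
\]
which is the desired formula. The main bookkeeping obstacle is to track the dominance condition $p\ge 1$, $p+q\ge 2$ in \eqref{eq:8} correctly: the $(p',q)=(0,0)$ term is harmlessly included in the sum for $S_1$ because the prefactor $(p'+q)$ vanishes there, and the constraint $p\ge 2$ in $S_2$ is enforced automatically by $\binom{n+p-2}{n}=0$ at $p=1$. Once these domain issues are in order, the proof reduces to two applications of the multinomial theorem.
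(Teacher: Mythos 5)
Your proof is correct and follows essentially the same route as the paper: both substitute Lemma~\ref{lem:Car2} into \eqref{eq:8}, drop the dominance constraint after checking that the boundary terms vanish, and evaluate the two resulting sums via the multinomial theorem, arriving at the same intermediate values $S_1=nC_{n-1}a^{n-1}(a+b)(a+b+(n-1)c)^{n-2}$ and $S_2=(n-1)C_{n-1}a^{n}(a+b+(n-1)c)^{n-2}$. The only cosmetic difference is that you absorb the factor $(p'+q)$ into the multinomial coefficient via $(p'+q)\binom{n-1}{p',q,r}=(n-1)\bigl[\binom{n-2}{p'-1,q,r}+\binom{n-2}{p',q-1,r}\bigr]$ and then shift indices, whereas the paper obtains the same evaluation by differentiating the multinomial theorem with respect to an auxiliary variable; the two manipulations are equivalent.
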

\begin{proof}
  By \eqref{eq:8} and Lemma~\ref{lem:Car2}, we have
  \[
  \vol\FF_{\Car_{n+2}}(a,b,c^{n-1})  = X = Y-Z,
\]
where
\[
Y =  \sum_{\substack{p+q+r=n\\ (p,q)\ge (1,1)}}
  \binom{2n-2}{p+n-2,q,r}a^{p+n-2}b^q c^r (p+q-1)\binom{n+p-2}{n-1}(n-1)^{r-1},
\]
\[
Z =  \sum_{\substack{p+q+r=n\\ (p,q)\ge (1,1)}}
\binom{2n-2}{p+n-2,q,r}a^{p+n-2}b^q c^r \binom{n+p-2}{n}(n-1)^r.
\]
Note that in the above two sums, the condition $(p,q)\ge(1,1)$ can be omitted
since the summand is zero if $p=0$ or $(p,q)=(1,0)$.
Thus
\begin{align*}
Y    &= \frac{a^{n-1}}{n-1}\binom{2n-2}{n-1} \sum_{p+q+r=n}
(p+q-1) \binom{n-1}{p-1,q,r}a^{p-1}b^q (c(n-1))^r,\\
Z &=  \sum_{p+q+r=n}
\binom{2n-2}{p+n-2,q,r}a^{p+n-2}b^q c^r \binom{n+p-2}{n}(n-1)^r\\
&=a^{n}\binom{2n-2}{n}  \sum_{p+q+r=n}
\binom{n-2}{p-2,q,r}a^{p-2}b^q (c(n-1))^r .
\end{align*}
Using  the multinomial theorem
\[
\sum_{i+j+k=m}  \binom{m}{i,j,k}x^iy^j z^k t^{i+j} = (xt+yt+z)^m,
\]
and its derivative with respect to $t$, i.e,
\[
\sum_{i+j+k=m} (i+j) \binom{m}{i,j,k} x^i y^j z^k t^{i+j-1} = m(x+y)(xt+yt+z)^{m-1},
\]
we obtain
\begin{align*}
Y &= C_{n-1}a^{n-1}n(a+b)(a+b+(n-1)c)^{n-2},\\
Z &=C_{n-1}a^n(n-1)(a+b+(n-1)c)^{n-2},
\end{align*}
and the proof follows.
\end{proof}

\section*{Acknowledgments}
The authors would like to thank Alejandro Morales for informing them  
that Theorems~\ref{thm:PS} and \ref{thm:main} are equivalent to
Theorems~\ref{Thm:PS} and \ref{Thm:Car}. They also thank Nathan Williams for
helpful discussion.

\end{document}